\documentclass[11pt]{amsart}
\usepackage{amsmath,amsfonts,amsthm,amsopn,amssymb,latexsym,soul}
\usepackage{array}
\usepackage{cite}
\usepackage{color,enumitem,graphicx}
\usepackage[colorlinks=true,urlcolor=blue,
citecolor=red,linkcolor=blue,linktocpage,pdfpagelabels,
bookmarksnumbered,bookmarksopen]{hyperref}
\usepackage[english]{babel}
\usepackage[left=2.61cm,right=2.61cm,top=2.73cm,bottom=2.73cm]{geometry}

\usepackage{bm}

\usepackage{lineno}
\usepackage{epsfig}
\usepackage{graphics}
\usepackage{float}
\usepackage{multirow}
\usepackage{lineno}
\usepackage{fullpage}
\usepackage[normalem]{ulem} 
\usepackage{xspace}
\usepackage{wrapfig}
\usepackage{color}
\usepackage{indentfirst}
\usepackage{comment}

\theoremstyle{definition}
\newtheorem{theorem}{Theorem}
\newtheorem{defi}{Definition}

\newtheorem{rmk}{Remark}

\newtheorem{lem}{Lemma}

\numberwithin{defi}{section} 
\numberwithin{theorem}{section}  
\numberwithin{prop}{section}  
\numberwithin{lem}{section}
\numberwithin{equation}{section} 
\numberwithin{rmk}{section}

\newcommand{\Pro}{
{\mathbf{P}}}

\newcommand{\Pas}{{\Pro\text{-a.s.}}}
\newcommand{\Fi}{{(\mathcal{F}_t )_{t\ge0}}}

\allowdisplaybreaks

\title[Exponential Decay of $L^2$-Solutions to SNLS Driven by Continuous Martingales]{Exponential Decay of $L^2$-Solutions to Stochastic Nonlinear Schr\"odinger Equations Driven by Continuous Martingales}

\author[I. D\^oku]{Isamu D\^oku}
\address{Department of Mathematics, Faculty of Education, Saitama University,
\newline\indent
Shimo-Okubo 255, Sakura-ku Saitama-shi, 338-8570, Japan}
\email{idoku@mail.saitama-u.ac.jp}

\author[S. Hashimoto]{Shunya Hashimoto}
\address{Faculty of Science, Kyoto University,
\newline\indent
Oiwake-tyou Kitashirakawa, Sakyo-ku Kyoto-shi, 606-8502, Japan}
\email{hashimoto.shunya.7m@kyoto-u.ac.jp}

\author[S. Machihara]{Shuji Machihara}
\address{Department of Mathematics, Faculty of Science, Saitama University,
\newline\indent
Shimo-Okubo 255, Sakura-ku Saitama-shi, 338-8570, Japan}
\email{machihara@rimath.saitama-u.ac.jp}

\makeatletter
\@namedef{subjclassname@2020}{\textup{2020} Mathematics Subject Classification}
\makeatother

\thanks{
This work was supported by JST, CREST Grant Number JPMJCR24Q6, Japan.
}
\subjclass[2020]{60H15, 35B65, 35J10}
\keywords{Stochastic Schr\"odinger equation, Exponential decay, Well-posedness, Continuous martingale}

\begin{document}
\begin{abstract}
We investigate the global well-posedness and asymptotic behavior of $L^2$-solutions to stochastic nonlinear Schr\"odinger equations with multiplicative noise driven by continuous square integrable martingales with density. Our approach relies on a rescaling transformation that converts the stochastic system into a random nonlinear Schr\"odinger equation with a potential acting as a damping term. Unlike the standard Brownian motion case, this induced potential plays a critical role in the dynamics. We establish the global existence of solutions and prove the pathwise exponential decay of the $L^2$-norm. Crucially, the strict positivity of the decay rate is intrinsically induced by the density of the martingale\rq{}s quadratic variation. This result generalizes the stabilization known for standard Brownian motion, thereby characterizing the stabilizing effect of the martingale noise.
\end{abstract}

\maketitle

\date{}
\maketitle

\section{Introduction and Main results}

We consider the Cauchy problem for the stochastic nonlinear Schr\"odinger equation with multiplicative noise driven by a continuous square integrable martingale:
\begin{align}
\label{SNLS}
\begin{cases}
\displaystyle 
idX(t,\xi)=\Delta X(t,\xi)dt+\lambda|X(t,\xi)|^{\alpha-1}X(t,\xi)dt-\frac{i}{2}\sum_{j=1}^N|\mu_j|^2|e_j(\xi)|^2X(t,\xi)d\langle M_j\rangle(t) \\
\hspace{22mm} +iX(t,\xi)dM(t,\xi), \ t\in(0,T), \ \xi\in \mathbb{R}^d, \\
X(0,\xi)=x(\xi).
\end{cases}
\end{align}
where $\lambda=\pm1, \ \alpha>1, \ \mu_j\in \mathbb{C}, \ N<\infty$ and the driving noise $M(t,\xi)$ is defined by
\begin{align}
M(t,\xi)&=\sum_{j=1}^N\mu_je_j(\xi)M_j(t), \ t\ge 0, \ \xi\in\mathbb{R}^d.
\end{align}
Here, $M_j(t)$ are real-valued independent continuous square integrable martingales with $M_j(0)=0$ for a probability space $(\Omega,\mathcal{F},P)$ with natural filtration $(\mathcal{F}_t)_{t\ge0}$, $\langle M_j\rangle(t)$ denotes their quadratic variations, and $e_j(\xi)$ are real-valued functions. 

The nonlinear Schr\"odinger equation is a universal model for describing wave propagation in nonlinear media. We refer to \cite{BCRG94, BCRG95, BG09, BPP10} for details.
While the deterministic theory is well-established (e.g., Ginibre-Velo \cite{GV79}, Tsutsumi \cite{T87}, Kato \cite{K87, K89}, Cazenave-Weissler \cite{CW90}), physically realistic models often require random perturbations. For SNLS driven by Brownian motion, de Bouard and Debussche \cite{BD99, BD03} established the fundamental well-posedness theory. More recently, Barbu, R\"ockner, and Zhang \cite{BRZ14, BRZ16} introduced a rescaling approach to transform stochastic equations into random equations, thereby demonstrating the existence of solutions in $L^2$ and $H^1$ within the same range of exponents as in the deterministic case.
Later, the authors \cite{DHM25} applied the rescaling approach to Kato\rq{}s method \cite{K87, K89} and proved the well-posedness of solutions in $H^2$.
See also \cite{BRZ18, BHM20, BHW19, BHW22, HHM24, HHM25, HRZ19, H20} for further work on the stochastic nonlinear Schr\"odinger equation.

A central issue in the study of NLS is the asymptotic behavior of solutions. In the deterministic setting, the $L^2$-norm (mass) is typically conserved. To observe exponential decay or stabilization of the energy, one usually introduces an explicit linear damping term of the form $ibX \ (b>0)$ or considers localized damping mechanisms (see, e.g., Tsutsumi \cite{T84}, Natali \cite{N15}, and references therein). In the stochastic context, however, it is known that multiplicative noise can induce a stabilizing effect on unstable systems. The interplay between the noise intensity and the decay rate is a subject of great interest.

Our work extends the rescaling approach to the general setting of continuous martingales. Unlike the standard Brownian case, where $\langle M\rangle_t=t$, the quadratic variation here is a random process. This generalization leads to a random time-dependent potential in the rescaled equation, which presents significant mathematical challenges compared to the deterministic constant potential induced by Brownian motion. 

To rigorously address these dynamics, our main results are structured under two distinct regimes of spatial assumptions on the noise. First, to establish the global well-posedness, we require spatial decay conditions on the noise coefficients to control the nonlinear terms via Strichartz estimates. Second, to investigate the precise exponential stabilization, we replace the spatial decay with spatial homogeneity and impose a strict non-degeneracy lower bound on the noise variance. 
This setting allows us to extract the pure damping effect of the martingale on the $L^2$-norm.
Crucially, while our rescaling transformation induces random fluctuations in the $L^2$-norm, we show via the strong law of large numbers that these fluctuations are almost surely asymptotically negligible. Consequently, we establish the pathwise exponential decay of the solution, demonstrating that a strictly positive deterministic decay rate is intrinsically induced by the noise's quadratic variation, capturing the stabilizing effect without any analytical loss or constraints on the noise intensity. \\

\textbf{Assumptions and Main Results} \\

We impose the following assumptions on the coefficients and the noise:
\begin{description}
\item[\textbf{(H1)}] $e_j\in C_b^{\infty}(\mathbb{R}^d), 1\le j\le N$ such that
\[ \lim_{|\xi|\to \infty}\zeta(\xi)(|e_j(\xi)|+|\nabla e_j(\xi)|+|\Delta e_j(\xi)|)=0, \]
where 
\begin{align*}
\zeta(\xi)=
\begin{cases}
1+|\xi|^2, & d\not=2, \\
(1+|\xi|^2)(\log (3+|\xi|^2))^2, & d=2.
\end{cases}
\end{align*}
\item[\textbf{(H2)}] $\langle M_j\rangle$ have the following densities. 
\[ d\langle M_j\rangle(s)=V_j(s)ds, \Pas, \]
where $V_j(s)\in L^{r_j}(0,\infty), \ 1<r_j\le \infty$.
\item[\textbf{(H3)}] There exists some constant $C>0$ such that for any $1\le j\le N, \ \|V_j\|_{L^{\infty}(0,\infty)}\le C, \Pas,$ holds.
\end{description}
First, we define the solution of \eqref{SNLS}.
\begin{defi}
\label{sol1}
Let $\alpha\in(1,1+\frac{4}{d})$ and $T>0$. 
A $L^2$-solution of \eqref{SNLS} is a pair $(X,\tau)$, where $\tau(\le T)$ is an $(\mathcal{F}_t)$-stopping time, and $X=(X(t))_{t\in[0,\tau]}$ is a $L^2(dt)\cap L^2(d\langle M\rangle(t))$-valued continuous $(\mathcal{F}_t)$-adapted process, such that $|X|^{\alpha}\in L^1([0,T],H^{-2}(\mathbb{R}^d)), \Pas$, and it satisfies $\Pas$,
\begin{align}
\label{solW}
X(t)&=x-\int_0^t(i\Delta X(s)+\lambda i|X(s)|^{\alpha-1}X(s))ds \nonumber \\
& \quad -\frac{1}{2}\sum_{j=1}^N\int_0^t|\mu_j|^2|e_j|^2 X(s)d\langle M_j\rangle(s)+\int_0^tX(s)dM(s), \quad t\in[0,\tau],
\end{align}
as equation in $H^{-2}$.
\end{defi}
The following are the main theorems. 
First, we introduce the existence of a local $L^2$-solution of \eqref{SNLS}.
\begin{theorem}
\label{local1}
Assume (H1) and (H2), and let $1<\alpha <1+\frac{4}{d}$.
Then for any $x\in L^2$ and $0<T<\infty$, there exists a sequence of $L^2$-solutions $(X_n,\tau_n), n\in\mathbb{N}$ of \eqref{SNLS} where $(\tau_n)_{n\in\mathbb{N}}$ is a sequence of increasing stopping times.
For every $n\ge 1$, it holds $\Pas$ that
\begin{align}
X_n|_{[0,\tau_n]}\in C([0,\tau_n];L^2)\cap L^{q}(0,\tau_n;L^{\alpha+1}),
\end{align}
and uniqueness holds in $C([0,\tau_n];L^2)\cap L^{q}(0,\tau_n;L^{\alpha+1})$. Here, $(q,\alpha+1)$ is a Strichartz pair, where $q=\frac{4(\alpha+1)}{d(\alpha-1)}\in (2+\frac{4}{d},\infty).$
Moreover, defining $\displaystyle \tau^*(x)=\lim_{n\to\infty}\tau_n$ and $\displaystyle X=\lim_{n\to\infty}X_n\mathbf{1}_{[0,\tau^*(x))}$, we have the blowup alternative, that is, for $\Pas \ \omega$, if $\tau^*(x)(\omega)<T$, then
\[ \lim_{t\to \tau^*(x)(\omega)}\|X(t)(\omega)\|_{L^2}=\infty. \]
\end{theorem}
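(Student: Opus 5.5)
The plan follows the rescaling approach of Barbu--R\"ockner--Zhang \cite{BRZ14}, adapted to continuous martingales. We set
\[ W(t,\xi)=\sum_{j=1}^N\mu_je_j(\xi)M_j(t), \qquad X(t)=e^{W(t)}y(t). \]
Itô's formula for continuous semimartingales gives
\[ d\big(e^{-W}X\big)=e^{-W}\Big(dX-X\,dW+\tfrac12 X\,d\langle W\rangle\Big)+\text{cross terms}. \]
Here $d\langle W\rangle=\sum_j\mu_j^2e_j^2\,d\langle M_j\rangle$, using independence of the $M_j$. The correction $-\frac12\sum|\mu_j|^2|e_j|^2X\,d\langle M_j\rangle$ in \eqref{SNLS} combines with this Itô term and with the $dM$-term. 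The result is that $y$ solves the \emph{random} equation (pathwise in $\omega$)
\[ \partial_t y=-i e^{-W}\Delta(e^{W}y)-\lambda i e^{(\alpha-1)\mathrm{Re}W}|y|^{\alpha-1}y-\sum_j\Big(\tfrac12(|\mu_j|^2+\mu_j^2)e_j^2\Big)V_j(t)\,y. \]
Expanding $e^{-W}\Delta(e^{W}y)=\Delta y+2\nabla W\cdot\nabla y+(\Delta W+|\nabla W|^2)y$ yields a Schr\"odinger operator
\[ A(t)=-i\big(\Delta+b(t)\cdot\nabla+c(t)\big). \]
Its coefficients are random and time dependent, and by (H1) they have the decay $\zeta^{-1}$. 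By (H2), the extra potential $\sum_j\gamma_je_j^2V_j(t)$ is in $L^{r_j}$ in time with values in $L^\infty$ in space.

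\textbf{Step 1: linear estimates.} We first establish Strichartz estimates for the evolution generated by $A(t)$ on $[0,T]$. We do this for fixed $\omega$, with $t\mapsto W(t,\omega)$ continuous. We invoke the local-smoothing and Strichartz estimates for lower-order perturbations of $\Delta$ used in \cite{BRZ14} (Marzuola--Metcalfe--Tataru type), which require precisely the decay hypothesis (H1). The time-integrable potential $V_j(t)e_j^2y$ is then treated perturbatively. On a short interval $[s,s+\delta]$, the inhomogeneous Strichartz estimate with dual pair $(1,2)$ gives the bound
\[ \Big\|\sum_j V_je_j^2 y\Big\|_{L^1(s,s+\delta;L^2)}\le \sum_j\|V_j\|_{L^{r_j}}\,\delta^{1-1/r_j}\,\|y\|_{L^\infty L^2}. \]
Since $r_j>1$, this is small for small $\delta$. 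Hence Strichartz estimates hold on each small interval and can be iterated to cover $[0,T]$, with constants depending on $\omega$ through $\sup_{[0,T]}|W|$ and $\|V_j\|_{L^{r_j}}$.

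\textbf{Step 2: local theory for $y$.} The nonlinearity is $e^{(\alpha-1)\mathrm{Re}W}|y|^{\alpha-1}y$, and the weight is bounded on $[0,T]$ pathwise. We run a contraction argument for the Duhamel formulation in
\[ C([0,\tau];L^2)\cap L^q(0,\tau;L^{\alpha+1}), \]
with subcritical $\alpha<1+4/d$. This gives a local time depending only on $\|y(0)\|_{L^2}$ and pathwise constants. It yields uniqueness in that class and the blowup alternative for $y$ by the standard continuation argument. Because $e^{\pm W}$ is bounded pathwise, both statements transfer to $X$.

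\textbf{Step 3: measurability and equivalence.} For measurability, we define truncated stopping times
\[ \tau_n=\inf\{t: \|y(t)\|_{L^2}+\|y\|_{L^q(0,t;L^{\alpha+1})}\ge n\}\wedge\tilde\tau_n\wedge T, \]
where $\tilde\tau_n$ also caps $\sup|M_j|$ and $\int V_j$. The solutions obtained as limits of the Picard iteration are then adapted, so each $\tau_n$ is a stopping time, and they increase. Finally, we verify via Itô's formula in $H^{-2}$ that $X_n=e^{W}y$ on $[0,\tau_n]$ satisfies \eqref{solW}, and conversely that any solution in the stated class yields a solution $y$. This gives the uniqueness statement for $X_n$.

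The main obstacle is Step 1: proving Strichartz estimates uniformly enough, in a measurable way, for the time-dependent operator with merely continuous-in-time random coefficients and an $L^{r}$-in-time potential. I would first try to quote the estimate of \cite{BRZ14} for frozen-type coefficients with H\"older/continuous time dependence and then add the potential perturbatively as above.
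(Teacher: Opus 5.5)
Your proposal is correct and follows essentially the paper's route. The shared steps are: the rescaling $X=e^My$; the Strichartz bounds for $U(t,s)$ quoted from \cite{BRZ14}; the $L^1_tL^2_x$ H\"older bound with factor $\tau^{1/r_j'}$ for the $V_j$-potential; a contraction in $L^\infty L^2\cap L^qL^{\alpha+1}$ with existence time depending only on the $L^2$-norm of the data and pathwise adapted constants; continuation for the blowup alternative; and transfer back to $X$ via Lemma~\ref{equiv}.

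The differences are organizational. The paper keeps the potential inside the fixed-point map rather than in the propagator. It also builds the $\tau_n$ iteratively as $\tau_{n+1}=\tau_n+\sigma_n$, with $\sigma_n$ the first time an explicit continuous adapted process $Z^{(n+1)}_t$ exceeds $1/3$. That construction is what makes the Picard limits adapted, which your exit-time definition of $\tau_n$ presupposes.
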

Next, we introduce the global well-posedness of the $L^2$-solution of \eqref{SNLS}.
\begin{theorem}
\label{main1}
Assume (H1), (H2) with $r_j=\infty$ for all $j$ and (H3), and let $1<\alpha<1+\frac{4}{d}$. 
Then, for each $x\in L^2$ and $0<T<\infty$, there is a unique solution $X=X(t,\xi)$ to \eqref{SNLS} in the sense of Definition \ref{sol1}, which satisfies
\begin{align}
X&\in L^2(\Omega;C([0,T];L^2)), \\
X&\in L^q(0,T;L^{\alpha+1}), \ \Pas
\end{align}
where $q=\frac{4(\alpha+1)}{d(\alpha-1)}\in (2+\frac{4}{d},\infty).$
Moreover, for $\Pas \ \omega\in\Omega$, the map $x\mapsto X(\cdot,x,\omega)$ is continuous from $L^2$ to $C([0,T];L^2)\cap L^q(0,T;L^{\alpha+1})$.
\end{theorem}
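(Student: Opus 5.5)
We will prove Theorem \ref{main1} with the rescaling transformation that underlies Theorem \ref{local1}. Set
\[ X(t)=e^{W(t)}y(t), \qquad W(t,\xi)=\sum_{j=1}^N\mu_je_j(\xi)M_j(t). \]
By the It\^o formula for continuous semimartingales, the Stratonovich-type correction $-\frac12\sum_j|\mu_j|^2|e_j|^2\,d\langle M_j\rangle$ in \eqref{SNLS} cancels the It\^o term produced by $e^{W}$. The process $y$ then solves the random NLS
\[ \partial_t y=-i e^{-W}\Delta(e^{W}y)-\lambda i e^{(\alpha-1)\mathrm{Re}W}|y|^{\alpha-1}y-\sum_j \tfrac12(\mu_j^2-|\mu_j|^2)e_j^2V_j(t)\,y. \]
Expanding the first term gives $-i(\Delta+b\cdot\nabla+c)y$, with $b=2\nabla W$ and $c=\Delta W+\sum_j(\nabla W)^2$. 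By (H1), $b$ and $c$ have the spatial decay needed for local smoothing / Strichartz estimates for $\Delta+b\cdot\nabla+c$; these estimates hold on $[0,T]$ pathwise, uniformly because $\sup_{t\le T}|M_j(t)|<\infty$ a.s. This is the setting of Theorem \ref{local1}. It supplies a local solution, uniqueness in $C([0,\tau_n];L^2)\cap L^q(0,\tau_n;L^{\alpha+1})$, and the blowup alternative. Since $\|e^{\pm W}\|_{L^\infty}$ is pathwise bounded on $[0,T]$, blowup of $X$ in $L^2$ is equivalent to blowup of $y$. It therefore suffices to prove a pathwise a priori $L^2$ bound on $[0,\tau^*)$.

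\textbf{Step 1 (mass bound).} We apply It\^o's formula to $\|X(t)\|_{L^2}^2$ directly in \eqref{solW}, regularizing with a Yosida approximation $(1-\varepsilon\Delta)^{-1}$ and letting $\varepsilon\to0$, as in the Brownian case. The terms $\Delta X$ and $\lambda|X|^{\alpha-1}X$ contribute zero, since they enter with the factor $i$. The drift correction gives $-\sum_j|\mu_j|^2\int |e_j|^2|X|^2\,d\langle M_j\rangle$. The quadratic variation of $\int X\,dM$ gives $+\sum_j|\mu_j|^2\int|e_j|^2|X|^2\,d\langle M_j\rangle$, using independence so that $\langle M_j,M_k\rangle=0$ for $j\ne k$. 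These two cancel, and
\[ \|X(t)\|_{L^2}^2=\|x\|_{L^2}^2+2\sum_j\int_0^t\int \mathrm{Re}(\mu_j)e_j|X|^2\,d\xi\,dM_j(s). \]
Hence $\|X(t\wedge\tau_n)\|^2$ is a nonnegative local martingale, so $E\|X(t\wedge\tau_n)\|^2=\|x\|^2$. By the Burkholder--Davis--Gundy inequality, $d\langle M_j\rangle=V_j\,ds$ and (H3),
\[ E\sup_{s\le t\wedge\tau_n}\|X\|^2\le \|x\|^2+C\,E\Big(\int_0^{t\wedge\tau_n}\|X\|^4\,ds\Big)^{1/2}\le \|x\|^2+\tfrac12E\sup\|X\|^2+C'\int_0^t E\sup_{r\le s\wedge\tau_n}\|X\|^2\,ds. \]
Gronwall's lemma then yields $E\sup_{t\le \tau_n}\|X\|^2\le C(T)\|x\|^2$ uniformly in $n$. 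Consequently $P(\tau^*<T)=0$ by the blowup alternative, and $X\in L^2(\Omega;C([0,T];L^2))$.

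\textbf{Step 2 (Strichartz norm, uniqueness, continuity).} Given $\sup_{[0,T]}\|y\|_{L^2}\le R(\omega)$, the local existence time in the subcritical range $\alpha<1+4/d$ depends only on $R$ and on the pathwise bounds for $W$. Here $r_j=\infty$ and (H3) keep $\int V_j$ over short intervals uniformly small, so that $\int_t^{t+\delta}V_j\,ds\le C\delta$. We cover $[0,T]$ by finitely many such intervals and sum the Strichartz bounds to get $y$, and hence $X$, in $L^q(0,T;L^{\alpha+1})$. Uniqueness follows from the local uniqueness in Theorem \ref{local1} on each interval. For continuous dependence, we take the difference of two solutions $y_1-y_2$ and use Strichartz estimates with the H\"older bound
\[ \||y_1|^{\alpha-1}y_1-|y_2|^{\alpha-1}y_2\|_{L^{q'}L^{(\alpha+1)'}}\le C\delta^{\theta}\big(\|y_1\|^{\alpha-1}_{L^qL^{\alpha+1}}+\|y_2\|^{\alpha-1}_{L^qL^{\alpha+1}}\big)\|y_1-y_2\|_{L^qL^{\alpha+1}}, \]
where $\theta=1-\frac{d(\alpha-1)}{4}>0$. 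Iterating over the subintervals gives Lipschitz continuity of $x\mapsto y$ on bounded sets, and multiplying by $e^{W}$ transfers it to $X$.

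\textbf{Main obstacle.} The delicate step is the rigorous justification of It\^o's formula for $\|X\|_{L^2}^2$ when $X$ is only $L^2$-valued and satisfies \eqref{solW} in $H^{-2}$. We will handle it by working with smooth data and regularized nonlinearities, where $X\in H^2$. There the identity above holds, with the nonlinear term exactly conservative, and the resulting bound is uniform. We then pass to the limit using the continuous dependence from Step 2, applied locally on $[0,\tau_n]$. A secondary concern is that the Strichartz constants for the variable-coefficient operator must be uniform in time along the random path. (H1) and the pathwise boundedness of $M_j$ on $[0,T]$ provide this.
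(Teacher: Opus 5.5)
Your proposal follows essentially the paper's route: rescaling for the local theory and the blowup alternative, the mass identity for $X$ via the $(I-\varepsilon\Delta)^{-1}$-regularized It\^o formula, BDG plus Gronwall for $\mathbb{E}\sup_t\|X(t)\|_{L^2}^2$, and iterated contraction estimates for continuity; the $J_\varepsilon$ argument already suffices, because passing $\varepsilon\to0$ only uses $X\in C([0,\tau_n];L^2)\cap L^q(0,\tau_n;L^{\alpha+1})$, so the smooth-data/$H^2$ detour in your last paragraph is not needed. Two small corrections: (i) the It\^o correction from $e^{W}$ does not cancel the drift correction (they cancel only for purely imaginary $\mu_j$), so the potential is $-\frac12\sum_j(|\mu_j|^2+\mu_j^2)e_j^2V_jy$ rather than your $-\frac12\sum_j(\mu_j^2-|\mu_j|^2)e_j^2V_jy$, which is harmless here since it is a bounded potential; (ii) since there is no pathwise $L^2$ bound uniform over data in a bounded set, your iteration yields continuity at each $x$, obtained as in the paper by propagating $\|y_m(\tau_k)\|_{L^2}\le\|y(\tau_k)\|_{L^2}+1$ from one subinterval to the next, and not Lipschitz continuity on bounded sets.
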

Here, we impose the following assumption in place of assumption (H1):
\begin{description}
\item[\textbf{(H4)}] The coefficients $\mu_j\in \mathbb{C}$ are not purely imaginary, i.e. $\text{Re} \ \mu_j\not=0$, and assume $M$ does not depend on the spatial variable $\xi$ (i.e., $e_j(\xi)\equiv 1$).
Furthermore, let $V_j(s)$ satisfy the following.
\begin{align*} 
V_j(s)\ge \alpha_0>0, \ \text{for all} \ s\in [0,T], \ 1\le j\le N.
\end{align*}
\end{description}
\begin{rmk}
\begin{enumerate}
\item Note that under (H4), the spatial independence $e_j(\xi)\equiv 1$ implies $\nabla M=\Delta M=0$. Consequently, the rescaled operator $A(t)$ defined in \eqref{A} reduces to the standard Laplacian $-i\Delta$. In this case, the standard Strichartz estimates apply directly without the spatial decay requirement of (H1). Thus, the global well-posedness stated in Theorem \ref{main1} remains completely valid under (H4).
\item The assumptions in (H4) are mathematically essential for our analysis due to the following reasons. To see this clearly, it is helpful to preview the rescaling transformation (introduced in Section 2), which converts the original solution $X$ into a new variable $y$ via $X=e^M y$:
\begin{enumerate}
\item[(i)] \textbf{Necessity of spatial independence}: If $e_j$ depends on the spatial variable $\xi$, the rescaled evolution operator becomes highly complicated due to the appearance of the advection term (e.g., $2\nabla M\cdot \nabla y$). Consequently, the mass $\|y(t)\|^2_{L^2}$ no longer decreases monotonically with respect to time. Furthermore, in the proof of Theorem \ref{exdecay2}, the spatial dependence prevents us from factoring out $e^{2M(t,\xi)}$ from the spatial integral $\|X(t)\|^2_{L^2}=\int_{\mathbb{R}^d}e^{2M(t,\xi)}|y(t,\xi)|^2d\xi$. This completely breaks down the subsequent asymptotic analysis. Therefore, relaxing this spatial independence assumption remains a highly challenging open problem for future research.
\item[(ii)] \textbf{Restriction on $\mu_j$}: Let $\mu_j=a_j+ib_j\in \mathbb{C}$ with $a_j,b_j\in\mathbb{R}$. In the rescaled equation for $y$, the energy dissipation rate strictly relies on $-a_j^2$. If $\mu_j$ were purely imaginary ($a_j=0$), the $L^2$-mass of the rescaled solution would be completely conserved, and no decay would occur. The condition $\text{Re} \ \mu_j\not=0$ is indispensable to guarantee energy dissipation.
\item[(iii)] \textbf{Strict positivity $V_j(t)\ge \alpha_0>0$}: The mere positivity $V_j(t)>0$ is insufficient because the density $V_j(t)$ could asymptotically decay to 0 as $t\to \infty$. In such a case, the time integral $\int_0^tV_j(s)ds$ may grow sub-linearly, failing to yield an exponential decay rate. The uniform lower bound $\alpha_0$ acts as the trigger that strictly bounds the decay rate away from zero, leading to the exponential stabilization.
\end{enumerate}
\end{enumerate}
\end{rmk}

Our third and most significant result concerns the long-time behavior of the solution. We prove that the martingale noise induces a stabilizing effect.
\begin{theorem}
\label{exdecay2}
Assume (H2) with $r_j=\infty$ for all $j$, (H3) and (H4), and let $1<\alpha<1+\frac{4}{d}$. 
Then, there exists a strictly positive deterministic constant $\omega>0$ such that, for any solution of \eqref{SNLS} given in Theorem \ref{main1}, we have
\[ \limsup_{t \to \infty} \frac{1}{t} \log \|X(t)\|^2_{L^2_{\xi}(\mathbb{R}^d)} \le -\omega, \quad \Pas \]
\end{theorem}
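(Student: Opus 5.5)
The plan is to use the rescaling $X=e^{M}y$, where under (H4) the noise is $M(t)=\sum_{j=1}^N\mu_jM_j(t)$ and does not depend on $\xi$. In the rescaled variable the $L^2$-mass of $y$ obeys an explicit linear ODE, which gives a closed formula for $\|X(t)\|_{L^2}^2$. The decay then follows from the strong law of large numbers for continuous martingales.

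\textbf{Step 1 (rescaled equation).} Write $\mu_j=a_j+ib_j$. Since the $M_j$ are independent, $\langle M_j,M_k\rangle=0$ for $j\ne k$. Itô's formula for the complex exponential then gives
\[
d e^{M(t)}=e^{M(t)}dM(t)+\tfrac12 e^{M(t)}\textstyle\sum_j\mu_j^2V_j(t)\,dt .
\]
Insert $X=e^My$ into \eqref{SNLS} with $e_j\equiv1$. The noise term $X\,dM$ cancels. Using $|e^{M}|=e^{\mathrm{Re}\,M}$, $y$ satisfies, pathwise, the random equation
\[
\partial_t y=-i\Delta y-\lambda i\,e^{(\alpha-1)\mathrm{Re}\,M(t)}|y|^{\alpha-1}y-\tfrac12\textstyle\sum_j(\mu_j^2+|\mu_j|^2)V_j(t)\,y .
\]
Here $\mu_j^2+|\mu_j|^2=2a_j^2+2ia_jb_j$. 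As noted in the Remark, $A(t)$ reduces to $-i\Delta$ in this setting, and Theorem \ref{main1} gives a global solution on every $[0,T]$. Because $T$ is arbitrary, $y$ is defined on $[0,\infty)$. I read (H4) as holding for every $T$, so $V_j\ge\alpha_0$ on $[0,\infty)$.

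\textbf{Step 2 (mass identity).} Take the $L^2$ inner product of the $y$-equation with $y$ and then the real part. The term $-i\Delta y$ gives zero. The nonlinear term also gives zero, because its coefficient $e^{(\alpha-1)\mathrm{Re}\,M}$ is real and so the term is $i$ times a real function multiplying $|y|^2$. The imaginary part $ia_jb_j$ of the damping coefficient also drops out. Hence
\[
\|y(t)\|_{L^2}^2=\|x\|_{L^2}^2\exp\Big(-2\sum_j a_j^2\int_0^tV_j(s)\,ds\Big).
\]
To make this rigorous for $L^2$ data, I would first prove it for regularized data (say $x\in H^1$, or via a Yosida/mollified approximation of the nonlinearity as in \cite{BRZ14}). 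I would then pass to the limit using the continuous dependence $x\mapsto X$ from Theorem \ref{main1}.

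Since $M$ is spatially constant, $e^{2\mathrm{Re}\,M(t)}$ factors out of the spatial integral, and we obtain
\[
\frac1t\log\|X(t)\|_{L^2}^2=\frac1t\log\|x\|_{L^2}^2+2\sum_j a_j\frac{M_j(t)}{t}-\frac2t\sum_ja_j^2\int_0^tV_j(s)\,ds .
\]
The last term is at most $-2\sum_ja_j^2\alpha_0$ by (H4).

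\textbf{Step 3 (martingale fluctuations).} We have $\langle M_j\rangle(t)=\int_0^tV_j\,ds\ge\alpha_0t\to\infty$. The strong law of large numbers for continuous local martingales (via Dambis--Dubins--Schwarz and the law of large numbers for Brownian motion) gives $M_j(t)/\langle M_j\rangle(t)\to0$ $\Pas$ By (H3), $\langle M_j\rangle(t)\le Ct$, so $M_j(t)/t\to0$ $\Pas$ Taking $\limsup$ yields the claim with the deterministic constant $\omega=2\alpha_0\sum_ja_j^2$, which is strictly positive because $a_j\ne0$.

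\textbf{Main obstacle.} I expect the hardest part to be Step 1–2: rigorously justifying the rescaling and the mass identity at the $L^2$ level. This requires checking that $X=e^My$ solves \eqref{SNLS} in the $H^{-2}$ sense of Definition \ref{sol1}, using uniqueness, and justifying the pairing with $y$ when $y$ is only in $C([0,T];L^2)\cap L^q(0,T;L^{\alpha+1})$. I would handle this by regularizing, computing with smooth approximations, and passing to the limit.
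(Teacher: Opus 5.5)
Your proposal is correct and follows essentially the same route as the paper: rescale by $X=e^{M}y$, get exponential decay of $\|y(t)\|_{L^2}^2$ from the mass identity with rate $\omega=2\alpha_0\sum_{j}(\mathrm{Re}\,\mu_j)^2$, factor $e^{2\mathrm{Re}\,M(t)}$ out of the spatial integral, and show $\mathrm{Re}\,M(t)/t\to0$ a.s. using the strong law of large numbers for continuous martingales together with linear lower and upper bounds on the quadratic variation. The differences are cosmetic: you keep the exact exponential formula where the paper uses Gronwall's inequality, and you apply the strong law to each $M_j$ separately rather than to $\mathrm{Re}\,M$ as a single martingale.
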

While the deterministic nonlinear Schr\"odinger equation strictly preserves the $L^2$-mass, precluding any exponential decay, the situation is different for general martingales. In our stochastic setting, a damping term inherently arises from the noise structure. Consequently, the solution exhibits exponential decay directly due to the influence of the noise. \\

\textbf{Organization of the Paper} \\

The remainder of this paper is organized as follows. In Section 2, we introduce the rescaling transformation and derive the random NLS with potential. In Section 3, we prove local well-posedness (Theorem \ref{local1}) using a Strichartz estimate adapted to our setting. In Section 4, we derive the mass conservation equation and prove global well-posedness (Theorem \ref{main1}). In Section 5, we show exponential decay of the $L^2$ norm of the solutions (Theorem \ref{exdecay2}).

\section{Rescaling approach}

We apply the following rescaling transformation to \eqref{SNLS}.
\begin{align}
\label{rs}
X(t,\xi)=e^{M(t,\xi)}y(t,\xi).
\end{align}
By It\^o\rq{}s product formula, we have that
\[ dX=e^Mdy+e^MydM+\frac{1}{2}\sum_{j=1}^N\mu_j^2e_j^2e^Myd\langle M_j\rangle. \]

We apply \eqref{rs} to \eqref{SNLS} to have
\begin{align}
\label{RSNLS1}
\begin{cases}
\displaystyle 
idy=e^{-M}\Delta(e^My)dt-\frac{i}{2}\sum_{j=1}^N(|\mu_j|^2|e_j|^2+\mu_j^2e_j^2)yd\langle M_j\rangle(t)+\lambda|e^{(\alpha-1)M}||y|^{\alpha-1}ydt, \\
y(0,\xi)=x(\xi).
\end{cases}
\end{align}
We rewrite equation \eqref{RSNLS1} as follows.
\begin{align}
\label{RSNLS2}
\begin{cases}
\displaystyle 
dy=A(t)ydt-\frac{1}{2}\sum_{j=1}^N(|\mu_j|^2|e_j|^2+\mu_j^2e_j^2)yd\langle M_j\rangle(t)-\lambda ie^{(\alpha-1)\text{Re}M}|y|^{\alpha-1}ydt, \\
y(0,\xi)=x(\xi),
\end{cases}
\end{align}
where
\begin{align}
\label{A}
b(t,\xi)&=2\nabla M(t,\xi), \nonumber \\
c(t,\xi)&=\sum_{j=1}^d(\partial_jM)^2+\Delta M(t,\xi), \\
A(t)y(t,\xi)&=-i(\Delta+b(t,\xi)\cdot \nabla+c(t,\xi))y(t,\xi). \nonumber
\end{align}
\begin{defi}
\label{sol2}
\begin{enumerate}
\item[(1)] 
Let $\alpha\in(1,1+\frac{4}{d})$ and $T>0$. 
A $L^2$-solution of \eqref{RSNLS1} is a pair $(y,\tau)$, where $\tau(\le T)$ is an $(\mathcal{F}_t)$-stopping time, and $y=(y(t))_{t\in[0,\tau]}$ is a $L^2(dt)\cap L^2(d\langle M\rangle(t))$-valued continuous $(\mathcal{F}_t)$-adapted process, such that $|y|^{\alpha}\in L^1([0,T];H^{-2}(\mathbb{R}^d))$, and it satisfies $\Pas$,
\begin{align*}
y(t)&=x-i\int_0^te^{-M(s)}\Delta(e^{M(s)}y(s))ds-\frac{1}{2}\sum_{j=1}^N\int_0^t(|\mu_j|^2|e_j|^2+\mu_j^2e_j^2)y(s)d\langle M_j\rangle(s) \\
& \quad -\int_0^t\lambda i |e^{(\alpha-1)M(s)}||y(s)|^{\alpha-1}y(s)ds,
\end{align*}
as equation in $H^{-2}$.
\item[(2)] 
Let $\alpha\in(1,1+\frac{4}{d})$ and $T>0$. 
A $L^2$-solution of \eqref{RSNLS2} is a pair $(y,\tau)$, where $\tau(\le T)$ is an $(\mathcal{F}_t)$-stopping time, and $y=(y(t))_{t\in[0,\tau]}$ is a $L^2(dt)\cap L^2(d\langle M\rangle(t))$-valued continuous $(\mathcal{F}_t)$-adapted process, such that $|y|^{\alpha}\in L^1([0,T];H^{-2}(\mathbb{R}^d))$, and it satisfies $\Pas$,
\begin{align*}
y(t)&=x+\int_0^tA(s)y(s)ds-\frac{1}{2}\sum_{j=1}^N\int_0^t(|\mu_j|^2|e_j|^2+\mu_j^2e_j^2)y(s)d\langle M_j\rangle(s) \\
& \quad -\int_0^t\lambda i e^{(\alpha-1)\text{Re}M(s)}|y(s)|^{\alpha-1}y(s)ds,
\end{align*}
as equation in $H^{-2}$.
\end{enumerate}
\end{defi}
The next theorem follows from an argument similar to the Brownian motion case argument \cite{BRZ14}.
\begin{lem}
\label{equiv}
\begin{enumerate}
\item[(i)] Let $(y,\tau)$ be a solution of \eqref{RSNLS1} in the sense of Definition \ref{sol2}. Set $X:=e^My$. 
Then $(X,\tau)$ is a solution of \eqref{SNLS} in the sense of Definition \ref{sol1}.
\item[(ii)] 
Let $(X,\tau)$ be a solution of \eqref{SNLS} in the sense of Definition \ref{sol1}. Set $y:=e^{-M}X$. 
Then $(y,\tau)$ is a solution of \eqref{RSNLS1} in the sense of Definition \ref{sol2}.
\end{enumerate}
\end{lem}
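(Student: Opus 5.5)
The plan is to apply It\^o's product formula to $X=e^{M}y$ (part (i)) and to $y=e^{-M}X$ (part (ii)). Both computations are done in the weak sense: pair against a test function $\varphi\in H^2$, or against $e^{\pm M}\varphi$, which lies in $H^2$ because $e_j\in C_b^\infty$ under (H1). This follows the Brownian argument of \cite{BRZ14}, with $dt$ in the It\^o correction replaced by $d\langle M_j\rangle$.

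\textbf{Part (i).} Start from the weak form of \eqref{RSNLS1}. The process $y$ has finite variation: its differential consists of $dt$ terms and $d\langle M_j\rangle$ terms only. Hence the bracket $\langle e^M,y\rangle$ vanishes. By It\^o's formula applied to $e^{M(t,\xi)}$, using independence so that $\langle M_j,M_k\rangle=0$ for $j\ne k$,
\[ d e^{M}=e^{M}dM+\tfrac12\sum_{j=1}^N\mu_j^2e_j^2e^{M}d\langle M_j\rangle. \]
The product rule then gives $dX=e^Mdy+X\,dM+\frac12\sum_j\mu_j^2e_j^2X\,d\langle M_j\rangle$. Now substitute $dy$ from \eqref{RSNLS1}:
\begin{itemize}
\item the term $e^{M}\cdot(-i e^{-M}\Delta(e^My))$ gives $-i\Delta X$;
\item the nonlinearity gives $-i\lambda|X|^{\alpha-1}X$, since $|e^{(\alpha-1)M}||y|^{\alpha-1}=|X|^{\alpha-1}$;
\item the two $\mu_j^2e_j^2$ contributions cancel, leaving exactly $-\frac12\sum_j|\mu_j|^2e_j^2X\,d\langle M_j\rangle$.
\end{itemize}
This is \eqref{solW}.

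\textbf{Remaining checks for (i).}
\begin{itemize}
\item \emph{Regularity.} Multiplication by $e^{\pm M(t)}$, with $M(t)\in C_b^\infty$ and continuous in $t$, preserves $L^2$-continuity and adaptedness. It also preserves membership in $L^2(dt)\cap L^2(d\langle M\rangle)$, by pathwise boundedness of $\sup_{t\le T}|M_j(t)|$.
\item \emph{Nonlinear term.} The bound $|X|^\alpha\le e^{\alpha\sup|{\rm Re} M|}|y|^\alpha$ keeps the $H^{-2}$ integrability; to justify this I would argue by duality with multiplication by smooth bounded functions on $H^2$.
\item \emph{Stochastic integral.} $\int_0^t X\,dM$ is well defined because $X\in L^2(d\langle M\rangle)$.
\end{itemize}

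\textbf{Part (ii).} This is symmetric. Compute $de^{-M}=-e^{-M}dM+\frac12\sum_j\mu_j^2e_j^2e^{-M}d\langle M_j\rangle$. This time $X$ is a semimartingale, so the cross-variation term is nonzero:
\[ d\langle e^{-M},X\rangle=-e^{-M}X\sum_j\mu_j^2e_j^2\,d\langle M_j\rangle. \]
The martingale parts $e^{-M}X\,dM$ and $-Xe^{-M}dM$ cancel. The drift terms combine to $-\frac12\sum_j(|\mu_j|^2e_j^2+\mu_j^2e_j^2)y\,d\langle M_j\rangle$, and the Laplacian term becomes $-ie^{-M}\Delta(e^My)$. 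This is Definition~\ref{sol2}(1).

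\textbf{Main obstacle.} The hard step is the rigorous justification of It\^o's product formula when $X$ or $y$ takes values only in $H^{-2}$, i.e.\ with distributional drift. My plan is to fix $\varphi\in C_c^\infty$ and apply the finite-dimensional It\^o formula to the real semimartingales
\[ \langle X(t),\varphi\rangle \quad\text{and}\quad \langle y(t),e^{\pm M(t)}\varphi\rangle. \]
To make this valid, expand $e^{\pm M(t,\xi)}$ as a smooth function $F(M_1(t),\dots,M_N(t),\xi)$ and apply It\^o's formula to the finitely many real martingales $M_j$, with $\xi$ as a parameter under the integral. A stochastic Fubini argument then exchanges the $\xi$-integral with the stochastic integrals; the Fubini step is where the condition $X\in L^2(d\langle M\rangle)$ is used. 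Finally, density of $C_c^\infty$ in $H^2$ gives the identities in $H^{-2}$.
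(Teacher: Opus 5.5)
Your proposal is correct and follows the route the paper indicates. The paper gives no separate proof but defers to the It\^o-product-rule argument of \cite{BRZ14} (the product formula displayed at the start of Section 2), and your bookkeeping matches what that argument gives with $dt$ replaced by $d\langle M_j\rangle$: the cancellation of the $\mu_j^2e_j^2$ terms in (i), and the cross-variation $-\sum_j\mu_j^2e_j^2y\,d\langle M_j\rangle$ in (ii).

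One precision for your rigorous step in (ii): write $\langle X(t),e^{-M(t)}\varphi\rangle=\Phi(t,M_1(t),\dots,M_N(t))$. The field $\Phi(\cdot,m)$ is itself a semimartingale, so plain It\^o's formula in the $M_j$ with $\xi$ as a parameter is not enough. You need an It\^o--Wentzell-type formula, or a Riemann-sum proof of the $H^{-2}$--$H^{2}$ product rule, and that is exactly where your cross-variation term enters.
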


By Lemma \ref{equiv}, we show the following instead of Theorem \ref{main1}.
\begin{theorem}
\label{main2}
Assume (H1), (H2) with $r_j=\infty$ for all $j$ and (H3), and let $1<\alpha<1+\frac{4}{d}$. 
Then, for each $x\in L^2$ and $0<T<\infty$, there is a unique solution $y=y(t,\xi)$ to \eqref{RSNLS2} in the sense of Definition \ref{sol2}, which satisfies
\begin{align}
e^Wy&\in L^2(\Omega;C([0,T];L^2)) \\
y&\in L^q(0,T;L^{\alpha+1}), \ \Pas
\end{align}
where $q=\frac{4(\alpha+1)}{d(\alpha-1)}\in (2+\frac{4}{d},\infty).$
Moreover, for $\Pas \ \omega\in\Omega$, the map $x\mapsto y(\cdot,x,\omega)$ is continuous from $L^2$ to $C([0,T];L^2)\cap L^q(0,T;L^{\alpha+1})$.
\end{theorem}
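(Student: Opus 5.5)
The proof will be a pathwise fixed-point argument for \eqref{RSNLS2}, followed by an a priori $L^2$ bound that makes the local solutions of Theorem~\ref{local1} global. Throughout, $\omega\in\Omega$ is fixed outside a null set.

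\textbf{Step 1: linear theory.} Under (H1), the coefficients $b(t,\xi)=2\nabla M$ and $c(t,\xi)$ are continuous in $t$, bounded in $\xi$, and decay like $\zeta(\xi)^{-1}$ at infinity. The decay of $b$ is what lets the perturbed operator $A(t)=-i(\Delta+b\cdot\nabla+c)$ still satisfy Strichartz estimates. I will invoke the local smoothing/Strichartz theory used in \cite{BRZ14} (Marzuola--Metcalfe--Tataru type estimates) on intervals $[0,T]$. This gives an evolution operator $U(t,s)$ with
\[
\|U(\cdot,s)f\|_{L^\infty(s,T;L^2)\cap L^{q}(s,T;L^{\alpha+1})}\le C_T(\omega)\|f\|_{L^2}
\]
and the matching inhomogeneous estimate in the dual pair $L^{q'}(L^{(\alpha+1)/\alpha})$. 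The damping term $-\frac12\sum_j(|\mu_j|^2e_j^2+\mu_j^2e_j^2)V_j\,y$ is a bounded multiplication operator, since $V_j\in L^\infty$ by (H3). It can therefore be absorbed into the evolution by a Duhamel perturbation argument.

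\textbf{Step 2: local existence and blowup alternative.} I will write the mild formulation with nonlinearity $F(y)=-\lambda i\,e^{(\alpha-1)\mathrm{Re}M}|y|^{\alpha-1}y$. The weight $e^{(\alpha-1)\mathrm{Re}M}$ is bounded on $[0,T]$ by pathwise continuity of $M$. Hölder's inequality in time gives
\[
\|F(y)\|_{L^{q'}(L^{(\alpha+1)/\alpha})}\le C\,T^{\theta}\|y\|^{\alpha}_{L^q(L^{\alpha+1})}, \qquad \theta=1-\tfrac{d(\alpha-1)}{4}>0 .
\]
The exponent $\theta$ is positive precisely because $\alpha<1+4/d$. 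The contraction then closes on a time interval whose length depends only on $\|x\|_{L^2}$ and $\omega$. This yields local existence, uniqueness in $C(L^2)\cap L^q(L^{\alpha+1})$, and Lipschitz dependence on $x$. To obtain adaptedness I will localize with stopping times $\tau_n$, for instance defined by $\|M\|$ and the norms of its derivatives exceeding $n$.

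\textbf{Step 3: mass identity and global existence (main obstacle).} I will compute $\frac{d}{dt}\|y(t)\|_{L^2}^2$, first rigorously for regularized data and then passing to the limit.
\begin{itemize}
\item The term $-i\Delta$ and the nonlinear term contribute nothing to the real part, since the nonlinear term is $-i$ times a real function times $y$.
\item The first-order term $b\cdot\nabla$ does contribute. For real $e_j$ and general complex $\mu_j$, integrating by parts gives
\[
\mathrm{Re}\int -i(b\cdot\nabla y+cy)\bar y=\int\Big(\tfrac12\nabla\cdot(\mathrm{Im}\,b)-\mathrm{Im}\,c\Big)\ldots
\]
\item The damping term contributes $-\sum_j \mathrm{Re}(|\mu_j|^2+\mu_j^2)\,e_j^2V_j\,|y|^2=-\sum_j 2(\mathrm{Re}\,\mu_j)^2 e_j^2V_j\,|y|^2$, which is nonpositive.
\end{itemize}
Altogether I expect a bound $\|y(t)\|_{L^2}^2\le \|x\|_{L^2}^2\exp\big(C\int_0^t(|\nabla M|^2+|\Delta M|)\,ds\big)$, with a pathwise finite exponent by (H1) and continuity of $M$. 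It is equally natural to compute with $X=e^My$ directly via Itô's formula in $L^2$, where $d\|X\|^2=2\,\mathrm{Re}\langle X,X\,dM\rangle+\ldots$. Combined with the blowup alternative from Step 2, this a priori bound excludes blowup before $T$.

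For the moment bound $e^Wy\in L^2(\Omega;C([0,T];L^2))$, I will use the Itô formula for $\|X\|^2$. The martingale part is controlled by the Burkholder--Davis--Gundy inequality using (H3), and Gronwall's lemma then finishes the estimate. Uniqueness and continuous dependence on $x$ follow from the local Lipschitz estimates iterated over finitely many subintervals of $[0,T]$.

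The step I expect to be hardest is making the Strichartz estimates for $A(t)$ rigorous. The operator is time-dependent with random coefficients, so one needs uniformity of the constants along stopping-time partitions. I will try to follow \cite{BRZ14} verbatim, treating the time variation of the coefficients as a perturbation on short intervals.
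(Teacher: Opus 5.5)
Your local theory (Strichartz estimates for $U(t,s)$ from \cite{BRZ14}, the contraction with $\theta=1-\frac{d(\alpha-1)}{4}$, the blowup alternative) matches the paper. So do the moment bound via It\^o/BDG/Gronwall for $\|X\|^2_{L^2}$ and continuous dependence by iterating along stopping times.

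The gap is the a priori estimate you feed into the blowup alternative in Step 3. The bound $\|y(t)\|^2_{L^2}\le\|x\|^2_{L^2}\exp\bigl(C\int_0^t(|\nabla M|^2+|\Delta M|)\,ds\bigr)$ is false in general. Your formula for the first-order term drops the contribution of $\mathrm{Re}\,b=2\nabla\mathrm{Re}M$. The correct computation is
\[
2\,\mathrm{Re}\int_{\mathbb{R}^d}(-i)\,b\cdot\nabla y\,\bar y\,d\xi
=4\int_{\mathbb{R}^d}\nabla\mathrm{Re}M\cdot\mathrm{Im}(\bar y\nabla y)\,d\xi
+2\int_{\mathbb{R}^d}\nabla\mathrm{Im}M\cdot\nabla|y|^2\,d\xi .
\]
The first term is a current (momentum) density, and it breaks your bound for three reasons:
\begin{itemize}
\item It is first order in $y$ and cannot be bounded by $C\|y\|^2_{L^2}$. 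For $y=e^{ik\cdot\xi}\phi$ with $\phi$ real it equals $4\int(k\cdot\nabla\mathrm{Re}M)\phi^2\,d\xi$, which is of order $|k|\,\|y\|^2_{L^2}$.
\item It is not even defined when $y(t)$ is only in $L^2$.
\item Regularizing the data does not rescue it, because the resulting bound involves $\|y\|_{H^1}$ and does not pass to the $L^2$ limit.
\end{itemize}
The term vanishes only when $\mathrm{Re}\,\mu_j=0$ or $\nabla e_j\equiv 0$. This is the advection effect the paper flags in its remark after (H4): with spatially dependent $e_j$, the rescaled random equation alone does not control the mass of $y$.

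The route you list only as an alternative is the one that works, and it is the paper's proof (Theorem~\ref{Global1}).
\begin{enumerate}
\item Regularize the equation rather than the data: apply $J_\varepsilon=(I-\varepsilon\Delta)^{-1}$ to the $H^{-2}$-valued equation for $X=e^My$.
\item Apply It\^o's formula to $\langle f_j,X_\varepsilon\rangle_{L^2}$ for an orthonormal basis $(f_j)$, using stochastic Fubini localized by $\sigma_{n,m}$. Sum over $j$ and let $\varepsilon\to0$.
\item The correction drift $-\frac12\sum_j|\mu_j|^2|e_j|^2X\,d\langle M_j\rangle$ cancels the It\^o term exactly, and the Laplacian and nonlinear terms drop out. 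This gives \eqref{Mc}.
\item BDG with (H3) and Gronwall then give $\mathbb{E}\sup_{t<\tau^*(x)}\|X(t)\|^2_{L^2}\le\widetilde C(T)$.
\end{enumerate}
Global existence for $y$ then follows pathwise from $\|y(t)\|_{L^2}\le\exp\bigl(\sup_{[0,T]\times\mathbb{R}^d}|\mathrm{Re}M|\bigr)\|X(t)\|_{L^2}$ together with the blowup alternative. The right-hand side is finite because $e_j\in C_b$ and the $M_j$ are continuous. So drop the $y$-mass computation and let the single identity for $\|X\|^2_{L^2}$ give both global existence and the moment bound.
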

The following lemma holds.
\begin{lem}{\cite{BRZ14}}
\label{Uts}
Assume (H1). 
For $\Pas \ \omega\in\Omega$, the operator $A(t)$ defined in \eqref{A} generates evolution operators $U(t,s)=U(t,s,\omega), \ 0\le s\le t\le T$, in the spaces $L^2(\mathbb{R}^d)$. 
Moreover, for each $x\in L^2$, the process $[s,T]\ni t\mapsto U(t,s)x\in L^2(\mathbb{R}^d)$ is continuous and $(\mathcal{F}_t)_{t\ge s}$-adapted, hence progressively measurable with respect to the filtration $(\mathcal{F}_t)_{t\ge s}$.
\end{lem}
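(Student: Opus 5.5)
Since the statement is quoted from \cite{BRZ14}, the plan is to verify that the Barbu--R\"ockner--Zhang argument for Brownian motion goes through unchanged. It is a pathwise, deterministic argument: for fixed $\omega$ one only uses the regularity of $t\mapsto M(t,\xi)$, never its law. We fix $\omega$ outside a null set and regard $b(t,\xi)=2\nabla M(t,\xi)$ and $c(t,\xi)=\sum_j(\partial_jM)^2+\Delta M$ from \eqref{A} as deterministic coefficients. Since $M(t,\xi)=\sum_j\mu_je_j(\xi)M_j(t)$ with $M_j$ continuous, these coefficients are continuous in $t$ with values in $C_b^\infty$-type spaces. By (H1), $\zeta(\xi)(|b|+|\nabla b|+|c|)\to0$ as $|\xi|\to\infty$, uniformly for $t\in[0,T]$, because $\sup_{t\le T}|M_j(t)|<\infty$ a.s. The key point is that $A(t)=-i(\Delta+b\cdot\nabla+c)$ is the Schr\"odinger operator perturbed by a first-order term whose coefficients decay at spatial infinity with the rate $\zeta$.

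We first use the decay condition in (H1) to place these lower-order coefficients in the class treated by Doi's smoothing estimates / Marzuola--Metcalfe--Tataru local smoothing. Here $b$ is complex-valued, since $\mu_j\in\mathbb{C}$, so $b$ is a genuine complex first-order perturbation and not a unitary gauge. Next we freeze time. For each fixed $t$, the operator $A(t)$ generates a $C_0$-group on $L^2$. The argument goes as follows. We write $A(t)=-i\Delta+B(t)$ with $B(t)$ of first order. We show that $A(t)-\gamma$ is quasi-dissipative in a suitably weighted $L^2$, or equivalently use that $\mathrm{Re}\langle -ib\cdot\nabla y,y\rangle=\tfrac12\langle (\mathrm{Im}\,\nabla\!\cdot b)\,y,y\rangle$ up to the imaginary part of $b$. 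The imaginary part of $b$ produces a loss of derivatives that must be absorbed, and this is done with the local smoothing estimate $\|\langle \xi\rangle^{-1}\nabla e^{it\Delta}x\|_{L^2_{t,\xi}}\lesssim\|x\|_{L^2}$ and its analogue under the decaying perturbation.

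Then we build $U(t,s)$ from the frozen groups, using a Kato-type construction for hyperbolic evolution families. Equivalently, we solve $\partial_tu=A(t)u$, $u(s)=x$, by a Duhamel fixed point around $e^{-i(t-s)\Delta}$ in the space $C([s,T];L^2)\cap L^2(s,T;H^{1/2}_{\zeta^{-1/2}})$. The first-order term is handled by the smoothing estimate, and the smallness of its coefficients at infinity together with a short-time argument near the origin closes the contraction. Iterating over subintervals of $[0,T]$ yields $U(t,s)$ with the evolution property, strong continuity, and a bound $\|U(t,s)\|_{L^2\to L^2}\le C(\omega,T)$.

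For measurability, $U(t,s)x$ is the limit of Picard iterates. Each iterate is a continuous functional of $\{M(r):s\le r\le t\}$, hence $\mathcal{F}_t$-measurable. Continuity in $t$ combined with adaptedness gives progressive measurability.

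The main obstacle is the derivative loss coming from the complex first-order term $b\cdot\nabla$. An energy estimate alone fails, and the decay weight $\zeta$, including the logarithmic correction in $d=2$, is used precisely to make the local smoothing/Doi estimate absorb this term. I would import that estimate from \cite{BRZ14} (Doi's theorem) rather than reprove it.
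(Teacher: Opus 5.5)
Your top-level reduction agrees with the paper, which gives no argument beyond citing \cite{BRZ14}. What justifies that citation is exactly what you say: the construction is pathwise in $\omega$ and uses only the continuity of $t\mapsto M_j(t)$ together with (H1). The mechanism you sketch for building $U(t,s)$, however, does not work as stated.

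\textbf{The Duhamel step.} In a fixed point around $e^{-i(t-s)\Delta}$ with all of $-i(b\cdot\nabla+c)$ treated as the perturbation, the first-order term is critical for the smoothing estimates. The inhomogeneous local smoothing estimate gains exactly one derivative. Its constant does not decrease as the time interval shrinks, because a packet at frequency $k$ crosses a fixed ball in time of order $1/|k|$. So wherever $|b|$ is of order one, $b\cdot\nabla$ has norm of order $\sup\zeta|b|$ in your space on every interval, however short; shortening time helps only at low frequencies. Since $b(t)=2\sum_j\mu_j\nabla e_j\,M_j(t)$ is not small in general, the contraction does not close. The adaptedness argument, which passes to the limit of the Picard iterates, inherits this gap.

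\textbf{The Kato step.} The route you call equivalent has a matching hole. It needs stability of $\{A(t)\}$. Since $e^{sA(t)}=e^{-M(t)}e^{-is\Delta}e^{M(t)}$, the naive bound on products over a partition is $\exp\bigl(C\sum_k|M(t_{k+1})-M(t_k)|\bigr)$. This blows up under refinement, because martingale paths are in general of unbounded variation. Stability would need a $t$-independent equivalent norm, such as Doi's pseudo-differential weight. You invoke quasi-dissipativity only for frozen $t$ and never construct such a norm.

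\textbf{The missing idea.} You need to gauge away the large part of $b$, which the identity $A(t)y=-ie^{-M(t)}\Delta\bigl(e^{M(t)}y\bigr)$ makes explicit. Smallness should then come from the time continuity of $M$, not from short time near the origin.
On $[t_0,t_0+\delta]$ set $z=e^{M(t_0)}y$. Then $\partial_t z=-i\bigl(\Delta+2\nabla D\cdot\nabla+\sum_k(\partial_kD)^2+\Delta D\bigr)z$ with $D(t)=M(t)-M(t_0)$.
The coefficient satisfies $\sup_\xi\zeta|\nabla D(t)|\le\sum_j|\mu_j|\,|M_j(t)-M_j(t_0)|\,\sup_\xi\zeta|\nabla e_j|$. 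This is small as long as $t-t_0$ stays below a stopping time determined by the modulus of continuity of the paths.
With this small coefficient your local smoothing contraction does close. Concatenating over such stopping-time steps gives $U(t,s)$, its bound, and adaptedness. Expanding around the frozen flow $e^{(t-t_0)A(t_0)}$ with perturbation $A(t)-A(t_0)$ amounts to the same thing. Importing Doi's time-independent gauge is the other way out.

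\textbf{Smaller points.}
\begin{enumerate}
\item With the paper's conventions, the component that can lose derivatives is $\mathrm{Re}\,b=2\nabla\mathrm{Re}\,M$, not $\mathrm{Im}\,b$. Under $\partial_t=-i(\partial_\xi^2+b\,\partial_\xi)$ the mode $e^{ik\xi}$ grows like $e^{(\mathrm{Re}\,b)kt}$. When all $\mu_j$ are purely imaginary, $e^{M}$ is unitary and nothing is lost.
\item (H1) controls $\zeta\Delta e_j$ but not the full Hessian, so $\zeta|\nabla b|\to0$ is not available (nor needed).
\item Generation for frozen $t$ requires no smoothing at all: $A(t)$ is similar to $-i\Delta$ via the bounded invertible multiplier $e^{M(t)}$.
\end{enumerate}
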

By Lemma \ref{Uts}, the solution $y$ to \eqref{RSNLS2} is taken in the sense of a mild solution. For any $t\in[0,T]$, we obtain
\begin{align}
\label{ymild1}
y(t)&=U(t,0)x-\frac{1}{2}\sum_{j=1}^N\int_0^tU(t,s)((|\mu_j|^2|e_j|^2+\mu_j^2e_j^2)y(s))d\langle M_j\rangle (s) \nonumber \\ 
& \quad -\lambda i\int_0^t U(t,s)\left( e^{(\alpha-1)\text{Re}M(s)}|y(s)|^{\alpha-1}y(s)\right) ds. 
\end{align}
We rewrite \eqref{ymild1} under the assumption (H2) as follows.
\begin{align}
\label{ymild2}
y(t)&=U(t,0)x-\int_0^tU(t,s)\left(\frac{1}{2}\sum_{j=1}^N(|\mu_j|^2|e_j|^2+\mu_j^2e_j^2)V_j(s)y(s)+\lambda ie^{(\alpha-1)\text{Re}M(s)}|y(s)|^{\alpha-1}y(s)\right) ds ,
\end{align}
for any $t\in[0,T]$.

\section{Local existence}

The following Strichartz estimate holds.
\begin{lem}{\cite{BRZ14}}
Assume (H1). Then for any $T>0, u_0\in L^2$ and $f\in L^{q_2'}(0,T;L^{p_2'})$, the solution of
\begin{align}
u(t)=U(t,0)u_0+\int_0^tU(t,s)f(s)ds, \ 0\le t\le T,
\end{align}
satisfies the estimate
\begin{align}
\|u\|_{L^{q_1}(0,T;L^{p_1})}\le C_T\left( \|u_0\|_{L^2}+\|f\|_{L^{q_2'}(0,T;L^{p_2'})} \right),
\end{align}
where $(p_1,q_1)$ and $(p_2,q_2)$ are Strichartz pairs, namely,
\begin{align}
& (p,q)\in [2,\infty]\times [2,\infty] : \frac{2}{q}=\frac{d}{2}-\frac{d}{p} , \ \text{if} \ d\not=2, \\
& (p,q)\in [2,\infty)\times (2,\infty] : \frac{2}{q}=\frac{d}{2}-\frac{d}{p} , \ \text{if} \ d=2.
\end{align}
Furthermore, the process $C_t, t\ge0$, can be taken to be $(\mathcal{F}_t)$-progressively measurable, increasing and continuous.
\end{lem}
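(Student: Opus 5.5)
The statement to prove is the Strichartz estimate for the evolution family $U(t,s)$ generated by $A(t)$, attributed to \cite{BRZ14}. The plan is to treat $A(t)=-i(\Delta+b\cdot\nabla+c)$ as a perturbation of the free Schr\"odinger group $e^{-it\Delta}$. For that group the classical Strichartz estimates (Keel--Tao, with the endpoint excluded when $d=2$) give, for any Strichartz pairs,
\[
\|e^{-it\Delta}u_0\|_{L^{q_1}L^{p_1}}\lesssim\|u_0\|_{L^2},
\qquad
\Big\|\int_0^te^{-i(t-s)\Delta}f(s)\,ds\Big\|_{L^{q_1}L^{p_1}}\lesssim\|f\|_{L^{q_2'}L^{p_2'}} .
\]
Write $u$ through Duhamel's formula relative to the free flow:
\[
u(t)=e^{-it\Delta}u_0+\int_0^te^{-i(t-s)\Delta}\big(-i(b\cdot\nabla u+cu)+f\big)(s)\,ds.
\]
The task is then to control the first-order term $b\cdot\nabla u$, which loses a derivative and so cannot be handled by Strichartz alone.

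\textbf{Step 1: local smoothing.} I would use the local smoothing (Kato smoothing) estimate with the weight $\zeta$:
\[
\|\zeta^{-1/2}\langle\nabla\rangle^{1/2}e^{-it\Delta}u_0\|_{L^2_tL^2_\xi}\lesssim\|u_0\|_{L^2},
\]
together with its inhomogeneous and dual versions. The logarithmic factor in $\zeta$ when $d=2$ is exactly what makes this estimate true in two dimensions.

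\textbf{Step 2: absorbing the drift.} By (H1), $|b|+|\nabla b|+|c|\le\varepsilon\,\zeta^{-1}$ outside a large ball. On the compact region, $b$ and $c$ are smooth and bounded. I would split $b\cdot\nabla u$ into a far part and a near part. The far part is bounded through the smoothing norm $\|\zeta^{-1/2}\langle\nabla\rangle^{1/2}u\|_{L^2L^2}$ and absorbed using the small constant $\varepsilon$. For the near part, the standard approach (Doi, Marzuola--Metcalfe--Tataru) is to conjugate by a pseudodifferential gauge, or to use a Mizohata-type condition. Here the first-order coefficient is purely imaginary times a real gradient: since $M$ is real up to the constants $\mu_j$, we have $b=2\nabla M$, and the operator $\Delta+b\cdot\nabla$ equals $e^{-M}\Delta e^{M}$ minus a zeroth-order term. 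This suggests writing $U(t,s)=e^{-M(t)}\tilde U(t,s)e^{M(s)}$. Because $e^{\pm M}$ and its derivatives are bounded by (H1), this reduces the problem to the free group plus zeroth-order potentials, up to time derivatives of $M$. Those time derivatives do not exist pathwise, so I would instead freeze $t$ on small time intervals on which $M$ varies little, using its continuity, and glue the resulting estimates.

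\textbf{Step 3: lower-order terms and gluing.} The potential $c$ is bounded and decays like $\zeta^{-1}$. Its contribution goes in the dual Strichartz space $L^1L^2$ and is handled by Gronwall, with a constant depending on $\sup_{[0,t]}\big(\|e_j\|_{C^2}|M_j|\big)$. Iterating over finitely many subintervals of $[0,T]$ gives the constant $C_T$. Since $C_T$ is an explicit increasing continuous function of $\sup_{s\le t}|M_j(s)|$, the process $C_t$ is adapted, increasing and continuous, hence progressively measurable.

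\textbf{Main obstacle.} The hardest step is the near-field drift in Step 2, that is, obtaining smoothing estimates for $\Delta+b\cdot\nabla$ with merely continuous-in-time coefficients. I would first try the time-freezing conjugation by $e^{M(t_k)}$ on short intervals. The resulting error $e^{-M(t)}\Delta e^{M(t)}-e^{-M(t_k)}\Delta e^{M(t_k)}$ is small in $C^2_b$ with weight $\zeta^{-1}$, so it can be absorbed by the smoothing estimate of Step 1.
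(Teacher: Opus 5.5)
The paper gives no argument for this lemma. It is imported from \cite{BRZ14}, where $b\cdot\nabla+c$ is treated as a time-dependent, asymptotically flat lower-order perturbation of $\Delta$, and a general local smoothing and Strichartz theory for such operators is invoked; the weight $\zeta$ in (H1) comes from there.

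Your route is genuinely different and more elementary, because it exploits the gradient structure of the drift. In fact $A(t)=-ie^{-M(t)}\Delta e^{M(t)}$ holds exactly. Although $M$ is complex-valued rather than real, $e^{\pm M(t_k)}$ are bounded smooth multipliers, which is all you need. The cleanest way to run your idea is to put $v=e^{M(t_k)}u$ on $I_k=[t_k,t_{k+1}]$, so that
\[
\partial_t v=-ie^{-D}\Delta\bigl(e^{D}v\bigr)+e^{M(t_k)}f
=-i\bigl(\Delta v+2\nabla D\cdot\nabla v+(\Delta D+\nabla D\cdot\nabla D)v\bigr)+e^{M(t_k)}f,
\qquad D=M(t)-M(t_k).
\]
The whole perturbation is then small everywhere, not only far out, and it decays in space. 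It can therefore be absorbed using only the free Strichartz and local-in-time Kato smoothing estimates. Your near/far splitting and any Doi/Mizohata-type argument become unnecessary. Since everything is local in time, the weight $\zeta$ (in particular the logarithm for $d=2$) is more than you need. The smoothing estimate you display is only needed on bounded time intervals, and for $d=1$ it is only true there. In exchange, the cited route does not depend on $b$ and $c$ having this gradient structure.

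Three steps in your sketch need repair.

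\emph{The constant $C_t$.} In your construction $C_T$ is of order $(Ce^{2m_T})^{K_T}$, with $m_T=\sup_{s\le T}\|\mathrm{Re}\,M(s)\|_{L^\infty}$ and $K_T$ the number of freezing intervals. $K_T$ is governed by the modulus of continuity of the $M_j$, not by $\sup_{s\le T}|M_j(s)|$, and $t\mapsto K_t$ jumps. So the continuity of $C_t$ does not follow as you claim. It is easily fixed. In the $v$-formulation the smallness threshold $\delta$ is deterministic, so take the stopping times $t_0=0$, $t_{k+1}=\inf\{t>t_k:\max_j|M_j(t)-M_j(t_k)|\ge\delta\}\wedge T$; there are finitely many by path continuity. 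On $[t_k,t_{k+1}]$ replace $K_t$ by $\widetilde K_t=k+1+\delta^{-1}\max_j\sup_{t_k\le s\le t}|M_j(s)-M_j(t_k)|$. Then $(Ce^{2m_t})^{\widetilde K_t}$ is adapted, nondecreasing and continuous.

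\emph{The endpoint.} Absorbing the drift into the Strichartz norm, and placing $f$ in the smoothing norm, requires retarded mixed estimates: from the dual smoothing space to $L^{q_1}L^{p_1}$, and from $L^{q_2'}L^{p_2'}$ to the smoothing space. Christ--Kiselev gives these from the untruncated versions only when $q_1>2$, resp.\ $q_2>2$. The endpoint $q=2$ ($d\ge3$), which the statement includes, needs a separate argument, although this paper only uses non-endpoint pairs.

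\emph{The absorption step.} The inequality $\|v\|_{X(I_k)}\le C\|v(t_k)\|_{L^2}+\tfrac12\|v\|_{X(I_k)}$ presupposes $\|v\|_{X(I_k)}<\infty$. Run it as a contraction, or for regular data followed by density, and then identify the result with $U(t,s)$.
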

By Lemma \ref{equiv}, we show the following instead of Theorem \ref{local1}.
\begin{theorem}
\label{LSL^2}

Assume (H1) and (H2), and let $1<\alpha <1+\frac{4}{d}$.
Then for any $x\in L^2$ and $0<T<\infty$, there exists a sequence of $L^2$-solutions $(y_n,\tau_n), n\in\mathbb{N}$ of \eqref{RSNLS2} where $(\tau_n)_{n\in\mathbb{N}}$ is a sequence of increasing stopping times.
For every $n\ge 1$, it holds $\Pas$ that
\begin{align}
y_n|_{[0,\tau_n]}\in C([0,\tau_n];L^2)\cap L^{q}(0,\tau_n;L^{\alpha+1}),
\end{align}
and uniqueness holds in $C([0,\tau_n];L^2)\cap L^{q}(0,\tau_n;L^{\alpha+1})$. Here, $(q,\alpha+1)$ is a Strichartz pair, where $q=\frac{4(\alpha+1)}{d(\alpha-1)}\in (2+\frac{4}{d},\infty).$
Moreover, defining $\displaystyle \tau^*(x)=\lim_{n\to\infty}\tau_n$ and $\displaystyle y=\lim_{n\to\infty}y_n\mathbf{1}_{[0,\tau^*(x))}$, we have the blowup alternative, that is, for $\Pas \ \omega$, if $\tau^*(x)(\omega)<T$, then
\[ \lim_{t\to \tau^*(x)(\omega)}\|y(t)(\omega)\|_{L^2}=\infty. \]
\end{theorem}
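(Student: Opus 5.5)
We plan to prove Theorem \ref{LSL^2} by a fixed-point argument for the mild formulation \eqref{ymild2}, in the spirit of the Brownian case \cite{BRZ14}. The only new ingredient is the linear potential term driven by $V_j(s)\,ds$. To keep the random coefficients under control, we first localize. For $n\in\mathbb{N}$ we introduce the stopping time
\[
\sigma_n=\inf\Big\{t\in[0,T]:\ C_t+\sup_{\xi}|\mathrm{Re}\,M(t,\xi)|+\sum_{j=1}^N\int_0^t|V_j(s)|^{r_j}ds\ \ge n\Big\}\wedge T .
\]
Here $C_t$ is the progressively measurable Strichartz constant from the lemma above, and $\sup_\xi|M(t,\xi)|$ is finite and continuous in $t$ because $e_j\in C_b^\infty$ and the $M_j$ are continuous. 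On $[0,\sigma_n]$ all coefficients are then bounded by deterministic constants depending only on $n$.

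\textbf{Step 1 (contraction).} For $t\le\sigma_n$ we define the map
\[
\Gamma y(t)=U(t,0)x-\int_0^tU(t,s)\Big(\tfrac12\sum_{j=1}^N(|\mu_j|^2|e_j|^2+\mu_j^2e_j^2)V_j(s)y(s)+\lambda i e^{(\alpha-1)\mathrm{Re}M(s)}|y|^{\alpha-1}y\Big)ds
\]
on the space $\mathcal{Z}_\tau=C([0,\tau];L^2)\cap L^q(0,\tau;L^{\alpha+1})$.

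For the nonlinear term we use the dual pair $(q',(\alpha+1)')$ and Hölder in the standard way. This gives
\[
\big\||y|^{\alpha-1}y\big\|_{L^{q'}L^{(\alpha+1)'}}\le \tau^{\theta}\|y\|^{\alpha}_{L^qL^{\alpha+1}},\qquad \theta=1-\tfrac{d(\alpha-1)}{4}>0,
\]
where $\theta>0$ precisely because $\alpha<1+\frac4d$.

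For the potential term we place it in $L^1(0,\tau;L^2)$, which is the dual of the admissible pair $(\infty,2)$. Since the coefficient $|\mu_j|^2|e_j|^2+\mu_j^2e_j^2$ is bounded, we obtain
\[
\|V_jy\|_{L^1L^2}\le \|V_j\|_{L^{r_j}(0,\tau)}\,\tau^{1-1/r_j}\,\|y\|_{L^\infty L^2},
\]
and this is small for small $\tau$ because $r_j>1$. Together with the Strichartz estimate (which also yields the $C([0,\tau];L^2)$ bound), $\Gamma$ maps a ball of radius $2C_n\|x\|_{L^2}$ into itself and is a contraction there, provided $\tau\le\delta$. The time $\delta$ depends only on $n$ and $\|x\|_{L^2}$.

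\textbf{Step 2 (iteration and adaptedness).} We iterate on successive intervals, each of length $\delta(n,\|y(\text{end})\|_{L^2})$, to build solutions $y_n$ up to stopping times $\tau_n\le\sigma_n$. We choose $\tau_n$ as the first time at which either $\sigma_n$ is reached or $\|y\|_{L^2}$ reaches $n$. Adaptedness follows because the fixed point is the limit of Picard iterates, each of which is adapted by Lemma \ref{Uts}. Uniqueness in $\mathcal{Z}_{\tau_n}$ follows from the same contraction estimate applied to the difference of two solutions on short intervals, and this also shows that $y_{n+1}=y_n$ on $[0,\tau_n]$. The fact that the mild solution satisfies Definition \ref{sol2} in $H^{-2}$ is checked as in \cite{BRZ14}, using $|y|^\alpha\in L^1H^{-2}$ via the Sobolev embedding.

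\textbf{Step 3 (blowup alternative).} This is the main obstacle. Suppose $\tau^*<T$ while $\liminf_{t\to\tau^*}\|y(t)\|_{L^2}<\infty$. Since $\sigma_n\uparrow T$ almost surely, the coefficients stay bounded near $\tau^*$. The local existence time therefore stays bounded below along a sequence of times approaching $\tau^*$, and the solution can be extended past $\tau^*$, which is a contradiction. The delicate point is to arrange the definition of $\tau_n$ so that $\tau^*<T$ forces $\|y\|_{L^2}$ to become unbounded, and to upgrade this from a $\limsup$ to a genuine limit. We will do this with the uniform-in-$n$ lower bound on $\delta$: if $\|y(t_k)\|_{L^2}\le K$ along a sequence $t_k\to\tau^*$, then the restart from $t_k$ survives for time $\delta(n,K)>\tau^*-t_k$, which contradicts $\tau^*<T$.
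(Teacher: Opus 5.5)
Your proposal is correct, and its analytic core matches the paper's: the mild formulation \eqref{mild}, the Strichartz estimate with the potential term in $L^1(0,\tau;L^2)$ (gaining $\tau^{1/r_j'}$) and the nonlinearity in $L^{q'}(0,\tau;L^{\frac{\alpha+1}{\alpha}})$ (gaining $\tau^{1-d(\alpha-1)/4}$), Banach's fixed point theorem, adaptedness through Picard iterates, and uniqueness by contraction on short intervals. The difference is in the stopping-time bookkeeping.

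The paper does not localize the coefficients separately. It takes random step lengths $\sigma_n$ as exit times of a process $Z^{(n+1)}_t$ that mixes $C_{\tau_n+t}$, $\gamma_{\tau_n+t}$, $\|V_j\|_{L^{r_j}(0,T)}$ and $\|y_n(\tau_n)\|_{L^2}$, and sets $\tau_{n+1}=\tau_n+\sigma_n$. It then proves the blowup alternative by contradiction: if $M^*=\sup_{[0,\tau^*)}\|y\|_{L^2}<\infty$, all $\sigma_n$ exceed one fixed $\sigma>0$, so $\tau_n$ would pass $T$ after finitely many steps.

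You instead cut off the coefficients first, which makes the local step $\delta(n,\|y\|_{L^2})$ deterministic. You also build a norm-exit time into $\tau_n$, so $\tau^*<T$ forces $\|y(\tau_n)\|_{L^2}=n$ for large $n$ essentially by construction. Your restart from times $t_k\to\tau^*$ with $\|y(t_k)\|_{L^2}\le K$, combined with uniqueness, then upgrades unboundedness to a genuine limit.

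Your route buys two things the paper's text does not supply. First, your localizing process $\int_0^t|V_j|^{r_j}ds$ is adapted. The paper's $Z_t$ contains $\|V_j\|_{L^{r_j}(0,T)}$, which under (H2) alone is only $\mathcal{F}_T$-measurable, so its $\tau_1$ is not obviously a stopping time. Second, the paper's contradiction argument yields only $\sup_{[0,\tau^*)}\|y\|_{L^2}=\infty$, i.e.\ a $\limsup$, not the stated $\lim$. The paper's route, in turn, stays closer to \cite{BRZ14} and avoids a separate norm cutoff.

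A few minor repairs are needed on your side:
\begin{itemize}
\item For $r_j=\infty$, use $\int_0^t|V_j|^{r}ds$ with some finite $r>1$; this is legitimate since $V_j\in L^\infty(0,T)\subset L^r(0,T)$.
\item Take a progressively measurable version of $V_j$.
\item Index from $n>\|x\|_{L^2}$ so that $\tau_n>0$.
\end{itemize}
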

\begin{proof}
We consider the equation in the mild sense.
\begin{align}
\label{mild}
y(t)&=U(t,0)x-\int_0^tU(t,s)\left(\frac{1}{2}\sum_{j=1}^N(|\mu_j|^2|e_j|^2+\mu_j^2e_j^2)V_j(s)y(s)+\lambda ie^{(\alpha-1)\text{Re}M(s)}g(y(s))\right) ds,
\end{align}
where $g(y)=|y|^{\alpha-1}y$. We define the following map
\begin{align}
F(y)(t)=U(t,0)x-\int_0^tU(t,s)\left(\frac{1}{2}\sum_{j=1}^N(|\mu_j|^2|e_j|^2+\mu_j^2e_j^2)V_j(s)y(s)+\lambda ie^{(\alpha-1)\text{Re}M(s)}g(y(s))\right) ds.
\end{align}
\textbf{Step}1. \ Fix $\omega \in \Omega$ and consider $F$ on the set
\[ \mathcal{X}^{\tau}_{M_1}=\left\{ y\in L^{\infty}([0,\tau];L^2)\cap L^q(0,\tau;L^{\alpha+1}): \ \|y\|_{\mathcal{X}_{\tau}}\le M_1 \right\}, \]
where $\tau=\tau(\omega)\in (0,T], \ M_1=M_1(\omega)>0$ are random variables and we define the norm as
\[ \|y\|_{\mathcal{X}_{\tau}}=\|y\|_{L^{\infty}(0,\tau;L^{2})}+\|y\|_{L^q(0,\tau;L^{\alpha+1})}. \]
From the Strichartz estimate, for any $y\in \mathcal{X}^{\tau}_{M_1}$, we have
\[ \|F(y)\|_{\mathcal{X}_{\tau}}\le 2C_{\tau}\left(\|x\|_{L^2}+\sum_{j=1}^N\|(|\mu_j|^2|e_j|^2+\mu_j^2e_j^2)V_jy\|_{L^1(0,\tau;L^{2})}+\|e^{(\alpha-1)\text{Re}M}g(y)\|_{L^{q'}(0,\tau;L^{\frac{\alpha+1}{\alpha}})}\right). \]
By H$\ddot{\text{o}}$lder\rq{}s inequality, we have
\begin{align*}
\|(|\mu_j|^2|e_j|^2+\mu_j^2e_j^2)V_jy\|_{L^1(0,\tau;L^{2})} & \le \||\mu_j|^2|e_j|^2+\mu_j^2e_j^2\|_{L^{\infty}(\mathbb{R}^d)}\|V_j\|_{L^{r_j}(0,\tau)}\|y\|_{L^{r_j'}(0,\tau;L^{2})} \\
&\le \||\mu_j|^2|e_j|^2+\mu_j^2e_j^2\|_{L^{\infty}(\mathbb{R}^d)}\|V_j\|_{L^{r_j}(0,T)}\tau^{\theta_j}\|y\|_{L^{\infty}(0,\tau;L^{2})}, \\
\|e^{(\alpha-1)\text{Re}M}g(y)\|_{L^{q'}(0,\tau;L^{\frac{\alpha+1}{\alpha}})} & \le \gamma_\tau \||y|^{\alpha}\|_{L^{q'}(0,\tau;L^{\frac{\alpha+1}{\alpha}})} \\
& \le \gamma_\tau\tau^{\widetilde{\theta}}\|y\|^{\alpha}_{L^q(0,\tau;L^{\alpha+1})},
\end{align*}
where
\[ \gamma_\tau:=\exp ((\alpha-1)\|M\|_{L^{\infty}(0,\tau;L^{\infty})}), \ \theta_j=\frac{1}{r_j'}>0, \ \widetilde{\theta}=1-\frac{d(\alpha-1)}{4}>0. \]
Therefore, setting $\theta=\min\{\theta_1,\theta_2,\cdots,\theta_N,\widetilde{\theta}\}>0$, we obtain
\begin{align}
\label{L2La}
\|F(y)\|_{\mathcal{X}_{\tau}}\le 2C_{\tau}\left(\|x\|_{L^2}+\sum_{j=1}^N\||\mu_j|^2|e_j|^2+\mu_j^2e_j^2\|_{L^{\infty}}\|V_j\|_{L^{r_j}(0,T)}\tau^{\theta}M_1+\gamma_{\tau}\tau^{\theta}M_1^{\alpha}\right).
\end{align}
Here we choose $M_1=3C_{\tau}\|x\|_{L^2}$ and take $\tau$ sufficiently small so that the following holds.
\begin{align}
\label{MTau}
2C_{\tau}\left(\|x\|_{L^2}+\sum_{j=1}^N\||\mu_j|^2|e_j|^2+\mu_j^2e_j^2\|_{L^{\infty}}\|V_j\|_{L^{r_j}(0,T)}\tau^{\theta}M_1+C_{\alpha,\tau}\gamma_{\tau}\tau^{\theta}M_1^{\alpha}\right)\le M_1,
\end{align}
where $C_{\alpha,\tau}>1$ is a constant satisfying
\begin{align*}
&\||y_1|^{\alpha-1}y_1-|y_2|^{\alpha-1}y_2\|_{L^{q'}(0,\tau;L^{\frac{\alpha+1}{\alpha}})} \\
&\le C_{\alpha,\tau}\tau^{\theta}(\|y_1\|^{\alpha-1}_{L^q(0,\tau;L^{\alpha+1})}+\|y_2\|^{\alpha-1}_{L^q(0,\tau;L^{\alpha+1})})\|y_1-y_2\|_{L^q(0,\tau;L^{\alpha+1})}.
\end{align*}
We define the real-valued continuous $\Fi$-adapted process
\[ Z_t^{(1)}:=2C_t\sum_{j=1}^N\||\mu_j|^2|e_j|^2+\mu_j^2e_j^2\|_{L^{\infty}}\|V_j\|_{L^{r_j}(0,T)}t^{\theta}+2\cdot 3^{\alpha-1}C_{\alpha,t} \|x\|^{\alpha-1}_{L^2}C_t^{\alpha}\gamma_tt^{\theta}, \ t\in[0,T], \]
and choose the $\Fi$-stopping time as
\[ \tau_1=\inf \left\{ t\in[0,T]:Z_t^{(1)}>\frac{1}{3} \right\} \wedge T. \]
Then, we have $\tau_1>0, \ Z_{\tau_1}^{(1)}\le \frac{1}{3}$, so we obtain $F(\mathcal{X}^{\tau_1}_{3C_{\tau_1}\|x\|_{L^2}})\subset \mathcal{X}^{\tau_1}_{3C_{\tau_1}\|x\|_{L^2}}$.

Next, we show that $F$ is a contraction mapping on $L^{\infty}([0,\tau_1];L^2)\cap L^q(0,\tau_1;L^{\alpha+1})$.
By a calculation analogous to that of \eqref{L2La}, we obtain
\begin{align}
\label{2/3}
& \|F(y_1)-F(y_2)\|_{\mathcal{X}_{\tau_1}} \nonumber \\
& \le 2C_{\tau_1}\sum_{j=1}^N\|(|\mu_j|^2|e_j|^2+\mu_j^2e_j^2)V_j(y_1-y_2)\|_{L^1(0,\tau_1;L^2)} \nonumber \\
& \quad +2C_{\tau_1}\gamma_{\tau_1}\||y_1|^{\alpha-1}y_1-|y_2|^{\alpha-1}y_2\|_{L^{q'}(0,\tau_1;L^{\frac{\alpha+1}{\alpha}})} \nonumber \\
& \le 2C_{\tau_1}\sum_{j=1}^N\||\mu_j|^2|e_j|^2+\mu_j^2e_j^2\|_{L^{\infty}}\|V_j\|_{L^{r_j}(0,T)}\tau_1^{\theta}\|y_1-y_2\|_{L^{\infty}(0,\tau_1;L^2)} \nonumber \\
& \quad +2C_{\alpha,\tau_1} C_{\tau_1}\gamma_{\tau_1}\tau_1^{\theta}(\|y_1\|^{\alpha-1}_{L^q(0,\tau_1;L^{\alpha+1})}+\|y_2\|^{\alpha-1}_{L^q(0,\tau_1;L^{\alpha+1})})\|y_1-y_2\|_{L^q(0,\tau_1;L^{\alpha+1})} \nonumber \\
& \le \left(4C_{\tau_1}\sum_{j=1}^N\||\mu_j|^2|e_j|^2+\mu_j^2e_j^2\|_{L^{\infty}}\|V_j\|_{L^{r_j}(0,T)}\tau_1^{\theta}+4C_{\alpha,\tau_1} C_{\tau_1}\gamma_{\tau_1}\tau_1^{\theta}M_1^{\alpha-1}\right)\|y_1-y_2\|_{\mathcal{X}_{\tau_1}} \nonumber \\
& = 2Z_{\tau_1}^{(1)}\|y_1-y_2\|_{\mathcal{X}_{\tau_1}} \nonumber \\
& \le \frac{2}{3}\|y_1-y_2\|_{\mathcal{X}_{\tau_1}},
\end{align}
for $y_1,y_2\in \mathcal{X}^{\tau_1}_{3C_{\tau_1}\|x\|_{L^2}}$.
Then, $F$ is a contraction on the space $L^{\infty}([0,\tau_1];L^2)\cap L^q(0,\tau_1;L^{\alpha+1})$. 
Hence, by Banach\rq{}s fixed point theorem, we obtain that there exists a unique solution $y\in L^{\infty}([0,\tau_1];L^2)\cap L^q(0,\tau_1;L^{\alpha+1})$ satisfying $y=F(y)$ on $[0,\tau_1]$. Moreover, there exists a sequence $y_{1,m}\in L^{\infty}(0,\tau_1;L^2)\cap L^q(0,\tau_1;L^{\alpha+1}), \ m\in \mathbb{N}$ such that $y_{1,m+1}=F(y_{1,m}), \ m\ge 1$, $y_{1,1}(t)=U(t,0)x$ and $\displaystyle \lim_{m\to \infty}y_{1,m}|_{[0,\tau_1]}=y$ in $L^{\infty}([0,\tau_1];L^2)\cap L^q(0,\tau_1;L^{\alpha+1})$. Define $y_1(t):=y(t\wedge \tau_1), \ t\in [0,T]$. Then we have 
\[ y_1=\lim_{m\to \infty}u_{1,m}(\cdot \wedge \tau_1), \ \text{in} \ L^{\infty}([0,T];L^2). \]
Therefore, since each $y_{1,m}$ is $\Fi$-adapted, $y_1$ is also $\Fi$-adapted.
So, we deduce that $(y_1,\tau_1)$ is a $L^2$-solution of \eqref{RSNLS2} such that $y_1(t)=y_1(t\wedge \tau_1), \ t\in [0,T]$ and $y_1|_{[0,\tau_1]}\in L^{\infty}([0,\tau_1];L^2)\cap L^q(0,\tau_1;L^{\alpha+1})$. \\

\noindent
\textbf{Step}2. \ 
We use an induction argument. Suppose that at the $n$-th step we have an $L^2$-solution $(y_n,\tau_n)$ such that $\tau_n\ge \tau_{n-1}, \ y_n(t)=y_n(t\wedge \tau_n), \ t\in [0,T], \ y_n|_{[0,\tau_n]}\in L^{\infty}([0,\tau_n];L^2)\cap L^q(0,\tau_n;L^{\alpha+1})$. 
We construct $(y_{n+1},\tau_{n+1}), \ \tau_{n+1}\ge \tau_n$.
Set
\[ \mathcal{X}^{\sigma_n}_{M_{n+1}}=\left\{ z\in L^{\infty}([0,\sigma_n];L^2)\cap L^q(0,\sigma_n;L^{\alpha+1}): \ \|z\|_{\mathcal{X}_{\sigma_n}}\le M_{n+1} \right\}, \]
where $\sigma_n=\sigma_n(\omega), \ M_{n+1}=M_{n+1}(\omega)$ are random variables. Then, we define the map $F_n$ as
\begin{align}
F_n(z)(t)&=U(\tau_n+t,\tau_n)y_n(\tau_n)-\int_0^tU(\tau_n+t,\tau_n+s) \nonumber \\
& \quad \times\left\{\frac{1}{2}\sum_{j=1}^N(|\mu_j|^2|e_j|^2+\mu_j^2e_j^2)V_j(s)z(s)+\lambda ie^{(\alpha-1)\text{Re}M(\tau_n+s)}g(z(s)))\right\}ds.
\end{align}
Similarly to \eqref{L2La}, for any $z\in \mathcal{X}^{\sigma_n}_{M_{n+1}}$, we have
\begin{align}
\|F_n(z)\|_{\mathcal{X}_{\sigma_n}}&\le 2C_{\tau_n+\sigma_n}\left(\|y_n(\tau_n)\|_{L^2} \right. \nonumber \\
& \quad \left.+\sum_{j=1}^N\||\mu_j|^2|e_j|^2+\mu_j^2e_j^2\|_{L^{\infty}}\|V_j\|_{L^{r_j}(0,T)}\sigma_n^{\theta}M_{n+1}+\gamma_{\tau_n+\sigma_n}\sigma_n^{\theta}M^{\alpha}_{n+1}\right).
\end{align}
This implies that $F(\mathcal{X}^{\sigma_n}_{M_{n+1}})\subset \mathcal{X}^{\sigma_n}_{M_{n+1}}$ and that $F_n$ is a contraction map on $\mathcal{X}^{\sigma_n}_{M_{n+1}}$. Here, we choose $M_{n+1}=3C_{\tau_n+\sigma_n}\|y_n(\tau_n)\|_{L^2}$ and take $\sigma_n$ sufficiently small so that the following holds.
\begin{align*}
2C_{\tau_n+\sigma_n}\left(\|y_n(\tau_n)\|_{L^2}+\sum_{j=1}^N\||\mu_j|^2|e_j|^2+\mu_j^2e_j^2\|_{L^{\infty}}\|V_j\|_{L^{r_j}(0,T)}\sigma_n^{\theta}M_{n+1}\right. \\
\left. +C_{\alpha,\tau_n+\sigma_n}\gamma_{\tau_n+\sigma_n}\sigma_n^{\theta}M^{\alpha}_{n+1}\right)\le M_{n+1}.
\end{align*}
Similarly to Step 1, we define the real-valued continuous $\{ \mathcal{F}_{\tau_n+t} \}$-adapted process
\[ Z_t^{(n+1)}:=2C_{\tau_n+t}\sum_{j=1}^N\||\mu_j|^2|e_j|^2+\mu_j^2e_j^2\|_{L^{\infty}}\|V_j\|_{L^{r_j}(0,T)}t^{\theta}+2\cdot 3^{\alpha-1}C_{\alpha,\tau_n+t}\|y_n(\tau_n)\|^{\alpha-1}_{L^2}C^{\alpha}_{\tau_n+t}\gamma_{\tau_n+t}t^{\theta}, \]
and choose the $\{ \mathcal{F}_{\tau_n+t} \}$-stopping time as
\[ \sigma_n:=\inf \left\{ t\in [0,T-\tau_n]:Z_t^{(n+1)}>\frac{1}{3} \right\} \wedge (T-\tau_n). \]
Then, $\sigma_n>0$ and $Z_{\sigma_n}^{(n+1)}\le \frac{1}{3}$.

Let $\tau_{n+1}:=\tau_n+\sigma_n$. Then $\tau_{n+1}$ is a $\Fi$-stopping time.
Similarly to Step 1, by the fixed point theorem, there exists a unique $z_{n+1}\in \mathcal{X}^{\sigma_n}_{M_{n+1}}$ satisfying $z_{n+1}=F_n(z_{n+1})$.
We define
\begin{align}
y_{n+1}(t)=
\begin{cases}
y_n(t) & t\in [0,\tau_n], \\
z_{n+1}((t-\tau_n)\wedge \sigma_n) & t\in (\tau_n,T].
\end{cases}
\end{align}
It follows from the definition of $F$ and $F_n$ that $y_{n+1}=F(y_{n+1})$ on $[0,\tau_{n+1}]$.
This implies that $y_{n+1}$ satisfies \eqref{mild} on $[0,\tau_{n+1}]$.
And, similar to the proof of \cite{BRZ14}, $y_{n+1}$ is $\Fi$-adapted in $L^2$.
Hence, $(y_{n+1},\tau_{n+1})$ is a $L^2$-solution of \eqref{RSNLS2}, such that $\tau_{n+1}\ge \tau_n, \ y_{n+1}(t)=y_{n+1}(t\wedge \tau_{n+1}), \ t\in[0,T], \ y_{n+1}|_{[0,\tau_{n+1}]}\in L^{\infty}([0,\tau_{n+1}];L^2)\cap L^q(0,\tau_{n+1};L^{\alpha+1})$. \\

\noindent
\textbf{Step}3. \ 
Next, we show that for each $n\in\mathbb{N}$, $y_n$ is continuous on $[0,\tau_n]$. First, we prove continuity at $t=0$. For an initial value $x\in L^2$ and $t\le \tau_n$, using the same argument as in Step 1, we have
\begin{align*}
&\|y_n(t)-x\|_{L^2} \\
&\le \|U(t,0)x-x\|_{L^2}+\left\|\int_0^tU(t-s)\left(\frac{1}{2}\sum_{j=1}^N(|\mu_j|^2|e_j|^2+\mu_j^2e_j^2)V_j(s)y_n(s)+\lambda ie^{(\alpha-1)\text{Re}M(s)}g(y_n(s))\right)ds\right\|_{L^2} \\
&\le \|U(t,0)x-x\|_{L^2}+\widetilde{C}_t|t|^{\theta}M^p\to 0, \quad \text{as} \ t\to 0.
\end{align*}
Note that the constant $\widetilde{C}_t$ satisfies $\widetilde{C}_t\to 0$ as $t\to 0$.
For general values of $t$, one may proceed similarly by taking $t$ as the initial time and $y_n(t)$ as the initial value. \\

\noindent
\textbf{Step}4. \ 
To show uniqueness, we take two solutions $(\tilde{y}_i,\sigma_i), \ i=1,2$. By \eqref{2/3}, for any $t,s>0$ with $s+t<\sigma_1 \wedge \sigma_2$, we have
\begin{align*}
& \|\tilde{y}_1-\tilde{y}_2\|_{L^{\infty}(s,s+t;L^2)}+\|\tilde{y}_1-\tilde{y}_2\|_{L^q(s,s+t;L^{\alpha+1})} \\
& \le \left(2C_t\sum_{j=1}^N\||\mu_j|^2|e_j|^2+\mu_j^2e_j^2\|_{L^{\infty}}\|V_j\|_{L^{r_j}(0,T)}t^{\theta}+4C_{\alpha,t} C_t\gamma_tt^{\theta}M^{\alpha-1}\right) \\
& \quad \times (\|\tilde{y}_1-\tilde{y}_2\|_{L^{\infty}(s,s+t;L^2)}+\|\tilde{y}_1-\tilde{y}_2\|_{L^q(s,s+t;L^{\alpha+1})}),
\end{align*}
where $M=\|\tilde{y}_1\|_{L^q(s,s+t;L^{\alpha+1})}+\|\tilde{y}_2\|_{L^q(s,s+t;L^{\alpha+1})}<\infty \ \text{a.s.}$.
Therefore, by taking $t$ sufficiently small, we have $\tilde{y}_1=\tilde{y}_2$ on $[s,s+t]$, which implies uniqueness on $[0,\sigma_1\wedge \sigma_2)$. Furthermore, by continuity on $L^2$, we have $\tilde{y}_1=\tilde{y}_2$ on $[0,\sigma_1\wedge \sigma_2]$.

Finally, we prove the blowup alternative. Suppose that 
\[ \Pro(M^*<\infty;\tau_n<\tau^*(x),\forall n\in \mathbb{N})>0, \]
where 
\[ M^*:=\sup_{t\in [0,\tau^*(x))}\|y(t)\|_{L^2}. \]
We define the real-valued continuous $\Fi$-adapted process
\[ Z_t:=2C_{\tau^*(x)+t}\sum_{j=1}^N\||\mu_j|^2|e_j|^2+\mu_j^2e_j^2\|_{L^{\infty}}\|V_j\|_{L^{r_j}(0,T)}t^{\theta}+2\cdot 3^{\alpha-1}C_{\alpha,\tau^*(x)+t}(M^*)^{\alpha-1}C^{\alpha}_{\tau^*(x)+t}\gamma_{\tau^*(x)+t}t^{\theta}, \]
and choose the $\Fi$-stopping time as
\[ \sigma:=\inf \left\{ t\in [0,T]:Z_t>\frac{1}{6} \right\} \wedge T. \]
For $\omega \in \{ M^*<\infty;\tau_n<\tau^*(x),\forall n\in \mathbb{N} \}$, since $\sigma_n(\omega)<T-\tau_n(\omega)$, we have
\[ \sigma_n(\omega)=\inf \{ t\in[0,T-\tau_n]:Z_t^{(n)}(\omega)>\frac{1}{3} \}. \]
On the other hand, since $\|y(\tau_n)\|_{L^2}\le M^*, \ C_{\tau_n+t}\le C_{\tau^*(x)+t}, \ \gamma_{\tau_n+t}\le \gamma_{\tau^*(x)+t}$ for any $n\ge 1$, $Z_t\ge Z_t^{(n)}$ holds. Therefore, $\sigma_n(\omega)>\sigma(\omega)>0$. Hence, $\tau_{n+1}(\omega)=\tau_n(\omega)+\sigma_n(\omega)>\tau_n(\omega)+\sigma(\omega)$. This implies $\tau_{n+1}(\omega)>\tau_1(\omega)+n\sigma(\omega)$ for every $n\ge 1$. Therefore, after a finite number of iterations, $\tau_n(\omega)$ exceeds $T$. This contradicts $\tau_n(\omega)\le T$. This completes the proof.
\end{proof}

\section{Global existence}

\begin{theorem}
\label{Global1}

Assume (H1),(H2) with $r_j=\infty$ for all $j$ and (H3).
Let $x\in L^2, \alpha\in(1,1+\frac{4}{d})$ and $(X,(\tau_n)_{n\in \mathbb{N}},\tau^*(x))$ be the maximal local solution of \eqref{SNLS}. 
We have $\Pas$ for $0\le t<\tau^*(x)$,
\begin{align}
\label{Mc}
\|X(t)\|^2_{L^2}=\|x\|^2_{L^2}+2\sum_{k=1}^N\int_0^t\int_{\mathbb{R}^d}\text{Re} (\mu_k) e_k|X(s)|^2d\xi dM_k(s).
\end{align}
Moreover
\begin{align}
\label{AE}
\mathbb{E}\left[ \sup_{0\le t<\tau^*(x)}\|X(t)\|^2_{L^2}\right]\le \widetilde{C}(T)<\infty.
\end{align}
\end{theorem}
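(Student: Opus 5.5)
We prove the mass identity \eqref{Mc} by an It\^o-formula argument for $\|X(t)\|_{L^2}^2$ on each interval $[0,\tau_n]$. Then \eqref{AE} will follow from the Burkholder--Davis--Gundy inequality and Gronwall's lemma, using (H3) to bound the quadratic variation.

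\textbf{Step 1 (regularization).} The solution $X$ is only $L^2$-valued, and the equation \eqref{solW} holds only in $H^{-2}$, so It\^o's formula cannot be applied directly to $\|X\|_{L^2}^2$. Following \cite{BRZ14}, we regularize. Let $\Phi_\varepsilon=(1-\varepsilon\Delta)^{-1}$ and set $X_\varepsilon=\Phi_\varepsilon X$ on $[0,\tau_n]$. Applying $\Phi_\varepsilon$ to \eqref{solW} gives an $H^2$-valued (in particular $L^2$-valued) semimartingale equation. The standard It\^o formula for $\|X_\varepsilon\|_{L^2}^2$ then yields
\[
d\|X_\varepsilon\|^2=2\,\mathrm{Re}\langle X_\varepsilon,-i\Delta X_\varepsilon-\lambda i\Phi_\varepsilon(|X|^{\alpha-1}X)\rangle dt
-\sum_j|\mu_j|^2\mathrm{Re}\langle X_\varepsilon,\Phi_\varepsilon(e_j^2X)\rangle d\langle M_j\rangle
+2\sum_j\mathrm{Re}\langle X_\varepsilon,\Phi_\varepsilon(\mu_je_jX)\rangle dM_j
+\sum_j|\mu_j|^2\|\Phi_\varepsilon(e_jX)\|^2d\langle M_j\rangle .
\]
Independence of the $M_j$ gives $\langle M_j,M_k\rangle=0$ for $j\neq k$. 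The Laplacian term vanishes because $\mathrm{Re}\langle X_\varepsilon,-i\Delta X_\varepsilon\rangle=0$.

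\textbf{Step 2 (passage to the limit $\varepsilon\to0$).} As $\varepsilon\to0$ we have $\Phi_\varepsilon\to I$ strongly on $L^2$ and on $L^{(\alpha+1)/\alpha}$. We handle the terms as follows.
\begin{itemize}
\item[(a)] The nonlinear term converges to $2\,\mathrm{Re}(-\lambda i)\int|X|^{\alpha+1}=0$. The dominating bound comes from $X\in L^q(0,\tau_n;L^{\alpha+1})$ together with H\"older's inequality in time, since $q>\alpha+1$.
\item[(b)] The two $d\langle M_j\rangle$ terms converge to $-|\mu_j|^2\int e_j^2|X|^2+|\mu_j|^2\int e_j^2|X|^2=0$. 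This uses $e_j\in C_b$, $V_j\in L^\infty$ and dominated convergence.
\item[(c)] The stochastic integrals converge in probability (uniformly in $t\le\tau_n$) to $2\int_0^t\int\mathrm{Re}(\mu_k)e_k|X|^2\,d\xi\,dM_k$. This follows from convergence of the integrands in $L^2(d\langle M_k\rangle)$, using continuity of $X$ in $L^2$.
\end{itemize}
This gives \eqref{Mc} on $[0,\tau_n]$ for each $n$, hence on $[0,\tau^*(x))$.

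\textbf{Step 3 (the bound \eqref{AE}).} We localize with $\tau_n$ (or with hitting times of $\|X\|_{L^2}$) so that all expectations are finite. Taking the supremum and expectation in \eqref{Mc} and applying Burkholder--Davis--Gundy gives
\[
\mathbb{E}\sup_{s\le t\wedge\tau_n}\|X(s)\|^2\le\|x\|^2+C\,\mathbb{E}\Big(\sum_k\int_0^{t\wedge\tau_n}\|e_k\|_\infty^2\|X(s)\|^4V_k(s)ds\Big)^{1/2}.
\]
By (H3) we have $V_k\le C$. We bound the right-hand side by
\[
C\,\mathbb{E}\Big[\sup_{s\le t\wedge\tau_n}\|X(s)\|\Big(\int_0^{t\wedge\tau_n}\|X(s)\|^2ds\Big)^{1/2}\Big]
\le\tfrac12\mathbb{E}\sup_{s\le t\wedge\tau_n}\|X(s)\|^2+C\int_0^t\mathbb{E}\sup_{r\le s\wedge\tau_n}\|X(r)\|^2ds .
\]
We absorb the first term into the left-hand side and apply Gronwall's lemma, obtaining the bound $2\|x\|^2e^{CT}$ uniformly in $n$. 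Letting $n\to\infty$ by monotone convergence yields \eqref{AE}. Consequently $\sup_{t<\tau^*}\|X(t)\|<\infty$ a.s., and the blowup alternative of Theorem \ref{local1} then forces $\tau^*=T$.

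\textbf{Main obstacle.} The main difficulty is Step 2: justifying It\^o's formula for an $L^2$-valued process satisfying its equation only in $H^{-2}$. In particular, the nonlinear term must converge to $0$ under the regularization, which relies on the $L^q(L^{\alpha+1})$ integrability from Theorem \ref{local1}. The $d\langle M_j\rangle$ cancellation is where the Stratonovich-type correction built into \eqref{SNLS} is essential.
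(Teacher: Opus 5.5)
Your proposal is correct and follows essentially the same route as the paper: regularization by $(I-\varepsilon\Delta)^{-1}$, It\^o's formula for $\|X_\varepsilon\|_{L^2}^2$, cancellation of the two $d\langle M_j\rangle$ terms and vanishing of the nonlinear term as $\varepsilon\to0$, then Burkholder--Davis--Gundy, Young and Gronwall. The differences are minor: the paper derives the It\^o formula by pairing with an orthonormal basis $\{f_j\}$ and using stochastic Fubini instead of applying the Hilbert-space It\^o formula directly, and your localization by hitting times of $\|X\|_{L^2}$ makes explicit the finiteness of $\mathbb{E}\sup\|X\|_{L^2}^2$ needed for the absorption step, which the paper leaves implicit.
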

\begin{proof}
Let $\{f_j\}_{j\in\mathbb{N}}\subset H^2(\mathbb{R}^d)$ be an orthonormal basis in $L^2$.
We define $J_{\varepsilon}=(I-\varepsilon \Delta)^{-1}$ and, for any $h\in H^{-2}$, let $h_{\varepsilon}=J_{\varepsilon}h$.
By \eqref{solW}, we have
\begin{align}
X_{\varepsilon}(t)&=x_{\varepsilon}-\int_0^t\left[i\Delta X_{\varepsilon}(s)+\frac{1}{2}\sum_{j=1}^N(|\mu_j|^2|e_j|^2V_jX)_{\varepsilon}(s)+\lambda ig_{\varepsilon}(s)\right]ds \nonumber \\
& \quad +\sum_{k=1}^N\int_0^t(X\mu_ke_k)_{\varepsilon}(s)dM_k(s), \ t\in [0,\tau_n], \ \Pas,
\end{align}
where $g_{\varepsilon}(s)=J_{\varepsilon}[|X(s)|^{\alpha-1}X(s)]\in L^2, \ 0\le s<\tau^*(x)$. 

Then for every $f_j$, we obtain
\begin{align}
\label{Mceq2}
\langle f_j,X_{\varepsilon}(t)\rangle_{L^2}&=\langle f_j,x_{\varepsilon}\rangle_{L^2}-\left\langle f_j,\int_0^t\left[i\Delta X_{\varepsilon}(s)+\frac{1}{2}\sum_{\ell=1}^N(|\mu_{\ell}|^2|e_{\ell}|^2V_{\ell}X)_{\varepsilon}(s)+\lambda ig_{\varepsilon}(s)\right]ds \right\rangle_{L^2} \nonumber \\
& \quad +\left\langle f_j,\sum_{k=1}^N\int_0^t(X\mu_ke_k)_{\varepsilon}(s)dM_k(s)\right\rangle_{L^2}, \ t\in[0,\tau_n].
\end{align}
By $X|_{[0,\tau_n]}\in C([0,\tau_n];L^2)\cap L^q(0 \tau_n;L^{\alpha+1})$, the integrals for the drift term can be interchanged. For the stochastic integral in \eqref{Mceq2}, we set
\[ \sigma_{n,m}=\inf\{t\in[0,\tau_n] : \|X(t)\|_{L^2}>m\}\wedge \tau_n. \]
Then, by the estimate $\|J_{\varepsilon}(f)\|_{L^2}\le \|f\|_{L^2}$, we have
\begin{align*}
&\mathbb{E} \int_0^{t\wedge \sigma_{n,m}}\sum_{k=1}^N|\langle f_j,(X\mu_ke_k)_{\varepsilon}(s)\rangle_{L^2}|^2d\langle M_k\rangle(s)  \\
&\le \sum_{k=1}^N\|\mu_ke_k\|^2_{L^{\infty}}\|f_j\|^2_{L^2}\mathbb{E} \left[\|V_k\|_{L^{\infty}(0,T)}\int_0^{t\wedge \sigma_{n,m}}\|X(s)\|^2_{L^2}ds\right] \\
&\le C\sum_{k=1}^N\|\mu_ke_k\|^2_{L^{\infty}}\|f_j\|^2_{L^2}m^2t<\infty.
\end{align*}
Therefore, from stochastic Fubini's theorem, we have
\begin{align}
\label{Fubini}
\left\langle f_j, \sum_{k=1}^N\int_0^t(X\mu_ke_k)_{\varepsilon}(s)dM_k(s)\right\rangle_{L^2}=\sum_{k=1}^N\int_0^t\langle f_j,(X\mu_ke_k)_{\varepsilon}(s)\rangle_{L^2}dM_k(s),
\end{align}
on $\{t\le \sigma_{n,m}\}$.
However, by $X|_{[0,\tau_n]}\in C([0,\tau_n];L^2)\cap L^q(0,\tau_n;L^{\alpha+1})$, for every $\Pas \ \omega\in \Omega$, there exists $m(\omega)\in\mathbb{N}$ such that for all $m\ge m(\omega)$, we have $\sigma_{n,m}=\tau_n$. Therefore, we obtain
\begin{align}
\label{bigcup}
\bigcup_{m\in\mathbb{N}}\{t\le \sigma_{n,m}\}=\{t\le \tau_n\}.
\end{align}
This implies \eqref{Fubini} holds on $\{t\le \tau_n\}$.
Therefore, we get
\begin{align}
\langle f_j,X_{\varepsilon}(t)\rangle_{L^2}&=\langle f_j,x_{\varepsilon}\rangle_{L^2}-\int_0^t\left\langle f_j,i\Delta X_{\varepsilon}(s)+\frac{1}{2}\sum_{\ell=1}^N(|\mu_{\ell}|^2|e_{\ell}|^2V_{\ell}X)_{\varepsilon}(s)+\lambda ig_{\varepsilon}(s)\right\rangle_{L^2}ds \nonumber \\
& \quad +\sum_{k=1}^N\int_0^t \langle f_j,(X\mu_ke_k)_{\varepsilon}(s)\rangle_{L^2}dM_k(s), \ t\in[0,\tau_n], \ \Pas
\end{align}
By Itô's product rule, we calculate
\begin{align*}
\|\langle f_j,X_{\varepsilon}(t)\rangle_{L^2}\|^2_{L^2}&=\|\langle f_j,x_{\varepsilon}\rangle_{L^2}\|^2_{L^2}+2\text{Re}\int_0^t\langle X_{\varepsilon}(s),f_j\rangle_{L^2} d\langle f_j,X_{\varepsilon}(s)\rangle_{L^2} \\
& \quad +\langle \langle X_{\varepsilon}(t),f_j\rangle_{L^2},\langle f_j,X_{\varepsilon}(t)\rangle_{L^2}\rangle \\
&=\|\langle f_j,x_{\varepsilon}\rangle_{L^2}\|^2_{L^2}+2\text{Re}\int_0^t\langle X_{\varepsilon}(s),f_j\rangle_{L^2} \langle f_j,-i\Delta X_{\varepsilon}(s)\rangle_{L^2} ds \\
& \quad +2\text{Re}\int_0^t \langle X_{\varepsilon}(s),f_j\rangle_{L^2}\left\langle f_j,-\frac{1}{2}\sum_{\ell=1}^N(|\mu_{\ell}|^2|e_{\ell}|^2 V_{\ell}X)_{\varepsilon}(s)\right\rangle_{L^2} ds \\
& \quad +2\text{Re}\int_0^t \langle X_{\varepsilon}(s),f_j\rangle_{L^2}\langle f_j,-\lambda i g_{\varepsilon}(s)\rangle_{L^2} ds \\
& \quad +2\text{Re}\sum_{k=1}^N\int_0^t\langle X_{\varepsilon}(s),f_j\rangle_{L^2}\langle f_j,(X\mu_ke_k)_{\varepsilon}(s)\rangle_{L^2}dM_k(s) \\
& \quad +\sum_{m=1}^N\int_0^t |\langle f_j,(X\mu_me_m)_{\varepsilon}(s)\rangle_{L^2}|^2 d\langle M_m\rangle(s), \quad t\in[0,\tau_n].
\end{align*}
Taking the sum over $j$, we have
\begin{align}
\label{Mce}
\|X_{\varepsilon}(t)\|^2_{L^2}&=\|x_{\varepsilon}\|^2_{L^2}+2\text{Re}\int_0^t \left\langle X_{\varepsilon}(s),-\frac{1}{2}\sum_{\ell=1}^N(|\mu_{\ell}|^2|e_{\ell}|^2V_{\ell}X)_{\varepsilon}(s)\right\rangle_{L^2} ds \nonumber \\
& \quad +2\text{Re}\int_0^t \langle X_{\varepsilon}(s),-\lambda i g_{\varepsilon}(s)\rangle_{L^2} ds+2\text{Re}\sum_{k=1}^N\int_0^t\langle X_{\varepsilon}(s),(X\mu_ke_k)_{\varepsilon}(s)\rangle_{L^2}dM_k(s) \nonumber \\
& \quad +\sum_{m=1}^N\int_0^t\|(X\mu_me_m)_{\varepsilon}(s)\|^2_{L^2}d\langle M_m\rangle(s), \quad t\in[0,\tau_n].
\end{align}
Therefore, since for $f\in L^p, p\in(1,\infty)$,
\[ \|J_{\varepsilon}(f)\|_{L^p}\le \|f\|_{L^p} \ \text{and} \ J_{\varepsilon}(f)\to f \ \text{in} \ L^p, \text{as} \ \varepsilon\to 0, \]
we take the limit $\varepsilon\to 0$ in \eqref{Mce}.
Then we notice that the second and fifth terms cancel, and the third term goes to zero.
As a result, we obtain $\Pas$ \eqref{Mc} on $\{t\le \tau_n\}$, which implies that \eqref{Mc} holds on $\{t\le \tau^*(x)\}$ as $\tau_n\to \tau^*(x), \Pas$.

Next, we obtain the a priori estimate \eqref{AE} considering $\displaystyle \sum_{j=1}^N|\mu_j|^2\|e_j\|^2_{L^{\infty}}<\infty$ and $\displaystyle \sum_{j=1}^N\|V_j\|_{L^{\infty}(0,T)}\le C, \Pas$. By the Burkholder-Davis-Gundy and Young inequalities, we have
\begin{align*}
& \mathbb{E}\left[ \sup_{s\in[0,t\wedge\tau_n]}\left| \sum_{j=1}^N\int_0^s\int_{\mathbb{R}^d}\text{Re}(\mu_j)e_j|X(r)|^2d\xi dM_j(r)\right|\right] \\
&\le C\mathbb{E}\left[ \int_0^{t\wedge \tau_n}\sum_{j=1}^N\left( \int_{\mathbb{R}^d}\text{Re}(\mu_j)e_j|X(s)|^2d\xi \right)^2 d\langle M_j\rangle(s) \right]^{\frac{1}{2}} \\
&\le C\mathbb{E}\left[ \sum_{j=1}^N\|V_j\|_{L^{\infty}(0,T)}\int_0^{t\wedge \tau_n}\|X(s)\|^4_{L^2}ds \right]^{\frac{1}{2}} \\
&\le C\mathbb{E}\left[ \sup_{s\in[0,t\wedge\tau_n]}\|X(s)\|_{L^2}\left( \int_0^{t\wedge \tau_n}\|X(s)\|^2_{L^2}ds \right)^{\frac{1}{2}}\right] \\
&\le C\sqrt{\mathbb{E} \sup_{s\in[0,t\wedge \tau_n]}\|X(s)\|^2_{L^2}} \sqrt{\mathbb{E} \int_0^{t\wedge\tau_n}\|X(s)\|^2_{L^2} ds} \\
&\le \frac{1}{4}\mathbb{E} \sup_{s\in[0,t\wedge \tau_n]}\|X(s)\|^2_{L^2}+C\int_0^t \mathbb{E}\left( \sup_{r\in[0,s\wedge \tau_n]}\|X(r)\|^2_{L^2} \right) ds,
\end{align*}
for all $t\in[0,T]$ and all $n\in\mathbb{N}$, where $C$ is a constant independent of $n$ and changes with each line. Combining this with \eqref{Mc}, we obtain
\[ \mathbb{E}\left[ \sup_{s\in[0,t\wedge \tau_n]}\|X(s)\|^2_{L^2}\right]\le 2\|x\|^2_{L^2}+4C\int_0^t\mathbb{E} \left( \sup_{r\in[0,s\wedge \tau_n]}\|X(r)\|^2_{L^2}\right)ds. \]
This implies
\[ \mathbb{E}\left[ \sup_{t\in [0,T\wedge \tau_n]}\|X(t)\|^2_{L^2}\right] \le \widetilde{C}(T), \]
where $\widetilde{C}(T)$ is a constant independent of $n$.
Therefore, we obtain \eqref{AE} as $n\to \infty$. 
\end{proof}
\begin{proof}[Proof of Theorem \ref{main2}]
By Theorem \ref{Global1} and the Blow-up alternative, we immediately show the existence of a global solution. 

We show continuous dependence on the initial data $x\in L^2$. 
Let $x_m\to x$ in $L^2$ as $m\to \infty$. For any $m\ge 1$, let $(y_m,T)$ denote the unique global solution to \eqref{RSNLS2} corresponding to the initial value $x_m$. And, let $(y, T)$ denote the unique global solution to \eqref{RSNLS2} corresponding to the initial value $x$.

Since $\|x_m\|_{L^2}\le \|x\|_{L^2}+1$ for all $m\ge m_1$ with $m_1$ large enough, we can modify the stopping time $\tau_1(\le T)$ in Step 1 in the proof of Theorem \ref{LSL^2} such that
\begin{align*}
\tau_1&=\inf\left\{t\in[0,T], 2C_t\sum_{j=1}^N\||\mu_j|^2|e_j|^2+\mu_j^2e_j^2\|_{L^{\infty}}\|V_j\|_{L^{\infty}(0,T)}t^{\theta} \right. \\
&\left. \hspace{5cm} +2\cdot 3^{\alpha-1}C_{\alpha,t}(\|x\|_{L^2}+1)^{\alpha-1}C_t^{\alpha}\gamma_tt^{\theta}>\frac{1}{3}\right\}\wedge T,
\end{align*}
which is independent for all $m\ge m_1$.
Hence, using similar contraction arguments as in Step 1 in the proof of Theorem \ref{LSL^2} and the uniqueness, we obtain
\[ \widetilde{M}_1:=\sup_{m\ge m_1}[\|y_m\|_{L^{\infty}(0,\tau_1;L^2)}+\|y_m\|_{L^q(0,\tau_1;L^{\alpha+1})}]\le 3C_{\tau_1}(\|x\|_{L^2}+1). \]
We note that for $m\ge m_1$
\begin{align*}
&\|y_m-y\|_{L^{\infty}(0,\tau_1;L^2)}+\|y_m-y\|_{L^q(0,\tau_1;L^{\alpha+1})} \\
&\le 2C_T\|x_m-x\|_{L^2}+\left(4C_{\tau_1}\sum_{j=1}^N\||\mu_j|^2|e_j|^2+\mu_j^2e_j^2\|_{L^{\infty}}\|V_j\|_{L^{\infty}(0,T)}\tau_1^{\theta}\right. \\
&\hspace{5cm} \left. +4C_{\alpha,\tau_1}C_{\tau_1}\gamma_{\tau_1}\tau_1^{\theta}\widetilde{M_1}^{\alpha-1}\right)\|y_m-y\|_{\mathcal{X}_{\tau_1}}.
\end{align*}
By the choice of $\tau_1$ and the bound on $\widetilde{M}_1$, we have
\[ 4C_{\tau_1}\sum_{j=1}^N\||\mu_j|^2|e_j|^2+\mu_j^2e_j^2\|_{L^{\infty}}\|V_j\|_{L^{\infty}(0,T)}\tau_1^{\theta}+4C_{\alpha,\tau_1}C_{\tau_1}\gamma_{\tau_1}\tau_1^{\theta}\widetilde{M_1}^{\alpha-1}\le \frac{2}{3}, \]
hence
\[ \frac{1}{3}\|y_m-y\|_{L^{\infty}(0,\tau_1;L^2)}+\frac{1}{3}\|y_m-y\|_{L^q(0,\tau_1;L^{\alpha+1})}\le 2C_T\|x_m-x\|_{L^2}\to 0, \ \text{as} \ m\to \infty. \]
Thus, we obtain the continuous dependence on the interval $[0,\tau_1]$. Now, since $y_m(\tau_1)\to y(\tau_1)$, using similar arguments as above we can extend the above results to $[0,\tau_2]$ with $\tau_2$ depending on $\|y(\tau_1)\|_{L^2}$ and $\tau_1\le \tau_2\le T$. Reiterating the arguments, we then have an increasing sequence of stopping times $\tau_n$ depending on $\|y(\tau_{n-1})\|_{L^2}$, such that the continuous dependence holds on $[0,\tau_n], n\in \mathbb{N}$. Since $\displaystyle \sup_{t\in[0,T]}\|y(t)\|_{L^2}<1, \Pas$, as in the proof for the blowup alternative in Theorem \ref{LSL^2}, we deduce that for $\Pas \omega$ there exists $n(\omega)<\infty$ such that $\tau_{n(\omega)}(\omega)=T$. Therefore, we obtain the continuous dependence on $[0,T]$.
\end{proof}

\section{Exponential Decay of Solutions}

Next, we show the exponential decay of the solution to \eqref{SNLS}.
Assume (H2) with $r_j=\infty$ for all $j$, (H3), and (H4).
In this case, we express equation \eqref{RSNLS2} as follows:
\begin{align}
\label{RSNLS3}
\begin{cases}
\displaystyle 
i\partial_ty=\Delta y-\frac{i}{2}\sum_{j=1}^N(|\mu_j|^2+\mu^2_j)V_j(s)y+\lambda e^{(\alpha-1)\text{Re}M}|y|^{\alpha-1}y, \\
y(0,\xi)=x(\xi), \quad \xi\in\mathbb{R}^d. 
\end{cases}
\end{align}
To demonstrate the exponential decay of \eqref{SNLS}, we first show the exponential decay of \eqref{RSNLS3}.
Multiplying both sides of \eqref{RSNLS3} by $\overline{y}$, taking the imaginary part, and integrating yields the following energy equation:
\begin{align}
\label{ene}
E_0(t):=\int_{\mathbb{R}^d}|y(t,\xi)|^2d\xi=-2\sum_{j=1}^N\int_0^t\int_{\mathbb{R}^d}(\text{Re} \ \mu_j)^2V_j(s)|y(s,\xi)|^2d\xi ds+\|x\|^2_{L^2}\le \|x\|^2_{L^2}.
\end{align}
Then, we have
\[ \frac{d}{dt}E_0(t):=\frac{d}{dt}\int_{\mathbb{R}^d}|y(t,\xi)|^2d\xi=-2\sum_{j=1}^N\int_{\mathbb{R}^d}(\text{Re} \ \mu_j)^2V_j(t)|y(t,\xi)|^2d\xi. \]

First, we prove the following theorem.
\begin{theorem}
\label{exdecay}
Assume (H2) with $r_j=\infty$ for all $j$, (H3) and (H4), and let $1<\alpha<1+\frac{4}{d}$. 
Then, there exist a strictly positive deterministic constant $\omega>0$ such that
\[ E_0(t)\le e^{-\omega t}E_0(0), \quad \Pas \]
for all $t\ge0$.
\end{theorem}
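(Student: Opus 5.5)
The plan is to turn the energy identity \eqref{ene} into a linear differential inequality for $E_0(t)$ and close it with Gronwall's lemma. The key structural fact is that under (H4) the damping coefficient in \eqref{RSNLS3} is spatially constant. Writing $\mu_j=a_j+ib_j$, we have $|\mu_j|^2+\mu_j^2=2a_j^2+2ia_jb_j$. After multiplying by $\overline{y}$ and taking imaginary parts, the contributions from $\Delta y$, from the nonlinearity $\lambda e^{(\alpha-1)\text{Re}M}|y|^{\alpha-1}y$ and from the imaginary part $ia_jb_j$ of the damping coefficient are all real multiples of $|y|^2$ or $|\nabla y|^2$. They therefore drop out of the imaginary part, and only the term with $a_j^2=(\text{Re}\,\mu_j)^2$ survives. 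This is exactly \eqref{ene}.

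First step: justify \eqref{ene} and the derivative formula rigorously for the $L^2$-solution of Theorem \ref{main2}. A solution in $C([0,T];L^2)\cap L^q(0,T;L^{\alpha+1})$ is not regular enough to multiply by $\overline{y}$ directly. I would handle this by the same regularization as in the proof of Theorem \ref{Global1}: apply $J_\varepsilon=(I-\varepsilon\Delta)^{-1}$, compute $\frac{d}{dt}\|J_\varepsilon y\|_{L^2}^2$, and pass to the limit $\varepsilon\to0$. The nonlinear cross term tends to $2\,\text{Re}\langle y,-\lambda i e^{(\alpha-1)\text{Re}M}|y|^{\alpha-1}y\rangle=0$, using the duality $L^{\alpha+1}$--$L^{\frac{\alpha+1}{\alpha}}$ and the fact that $M(t)$ is spatially constant.

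Alternatively, and more cleanly, I would set $z(t)=\exp\big(\sum_j\int_0^t\frac12(|\mu_j|^2+\mu_j^2)V_j(s)ds\big)\,y(t)$. Since the damping factor depends only on $t$, $z$ solves an undamped NLS with a time-dependent real coefficient on the nonlinearity, and the standard mass conservation gives $\|z(t)\|_{L^2}=\|x\|_{L^2}$. This conservation follows by the usual approximation argument, using the uniqueness from Theorem \ref{LSL^2}. It yields the exact formula
\[
E_0(t)=\exp\Big(-2\sum_{j=1}^N(\text{Re}\,\mu_j)^2\int_0^tV_j(s)\,ds\Big)\|x\|_{L^2}^2 .
\]

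Second step: bound the exponent using (H4). For every $j$ we have $(\text{Re}\,\mu_j)^2>0$ and $V_j(s)\ge\alpha_0$. Setting $\omega:=2\alpha_0\sum_{j=1}^N(\text{Re}\,\mu_j)^2>0$, which is deterministic and strictly positive, gives
\[
\frac{d}{dt}E_0(t)\le-\omega E_0(t),
\]
and hence $E_0(t)\le e^{-\omega t}E_0(0)$ with $E_0(0)=\|x\|_{L^2}^2$. If one uses only the differential inequality, Gronwall's lemma applies because $E_0$ is absolutely continuous by \eqref{ene}. The bound holds for $\Pas$ $\omega$, namely on the full-measure set where (H2) and the lower bound on $V_j$ hold.

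The main obstacle is twofold. The first part is the rigorous justification of the mass identity at the $L^2$ level, which the time-dependent gauge $z$ reduces to classical mass conservation. The second is that (H4) states the lower bound $V_j\ge\alpha_0$ only on $[0,T]$, whereas the theorem claims the bound for all $t\ge0$. I would read the lower bound as holding on every finite horizon with the same $\alpha_0$. The global solutions on $[0,T]$ are consistent across different $T$ by uniqueness, so they define a solution on $[0,\infty)$, and the estimate on each $[0,T]$ with the fixed constant $\omega$ then gives the claim for all $t\ge0$.
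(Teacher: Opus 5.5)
Your proof is correct and follows the paper's route. The paper likewise reads off $\frac{d}{dt}E_0=-2\big(\sum_j(\text{Re}\,\mu_j)^2V_j(t)\big)E_0$ from \eqref{ene}, bounds $V_j\ge\alpha_0$, and applies Gronwall with the same deterministic $\omega=2\alpha_0\sum_j(\text{Re}\,\mu_j)^2$. Your additions fill in points the paper passes over silently: the $L^2$-level justification of \eqref{ene}, either via $J_\varepsilon$ or via the time-dependent gauge $z$ (which even gives the exact formula for $E_0(t)$), and your observation that the lower bound in (H4) must be read on every $[0,T]$.
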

\begin{proof}
By the energy identity \eqref{ene} and assumption (H4), we obtain
\begin{align*}
\frac{d}{dt} E_0(t)=-2 \left( \sum_{j=1}^N (\text{Re} \ \mu_j)^2 V_j(t) \right) E_0(t)\le -2\alpha_0 \left( \sum_{j=1}^N(\text{Re} \ \mu_j)^2\right) E_0(t).
\end{align*}
Let $\omega:=2\alpha_0 \sum_{j=1}^N(\text{Re} \ \mu_j)^2$. By $\text{Re} \ \mu_j\not=0$, we have $\omega>0$. Applying Gronwall's inequality yields
\begin{equation*}
E_0(t) \le E_0(0) e^{-\omega t} \quad \text{for all } t \ge 0.
\end{equation*}
This completes the proof of the exponential decay for the rescaled solution $y(t)$.
\end{proof}

Finally, we prove Theorem \ref{exdecay2} of the main theorem using Theorem \ref{exdecay}.
\begin{proof}[Proof of Theorem \ref{exdecay2}]
Let $X=e^My$. By Theorem \ref{exdecay}, we have the pathwise estimate
\[ \|X(t)\|^2_{L^2_{\xi}(\mathbb{R}^d)}=e^{2\text{Re} M(t)}\|y(t)\|^2_{L^2_{\xi}(\mathbb{R}^d)}\le \|x\|^2_{L^2}e^{-\omega t}e^{2\text{Re} M(t)}. \]
To investigate the precise exponential decay rate, we take the natural logarithm of both sides and divide by $t>0$, which yields
\begin{align}
\label{540}
\frac{1}{t}\log \|X(t)\|^2_{L^2_{\xi}(\mathbb{R}^d)}\le \frac{\log \|x\|^2_{L^2}}{t}-\omega+2\frac{\text{Re} M(t)}{t}.
\end{align}
Recall that under the assumption (H4), the continuous martingale $\text{Re} M(t)$ is given by $\text{Re} M(t)=\sum_{j=1}^N(\text{Re} \ \mu_j)M_j(t)$. Its quadratic variation is calculated as
\[ \langle \text{Re} M\rangle_t=\sum_{j=1}^N(\text{Re} \ \mu_j)^2\langle M_j\rangle_t=\sum_{j=1}^N(\text{Re} \ \mu_j)^2\int_0^tV_j(s)ds. \]
By (H4) and the fact that $\text{Re} \mu_j \neq 0$ and $V_j(t) \ge \alpha_0 > 0$, it follows that the quadratic variation of the real-valued continuous martingale $\text{Re} M(t)$ satisfies $\langle \text{Re} M \rangle_t \to \infty$ as $t \to \infty$ $P$-a.s. 
Recall that for a continuous square-integrable martingale, the pair $(\text{Re} M, \langle \text{Re} M \rangle)$ satisfies the strong law of large numbers (cf. \cite[p. 144]{LS89}):
\begin{align*}
\lim_{t \to \infty} \frac{\text{Re} M(t)}{\langle \text{Re} M \rangle_t} = 0, \quad \Pas
\end{align*}
Using this property, we can evaluate the term in \eqref{540}. Specifically, we observe that
\begin{align*}
\frac{\text{Re} M(t)}{t} = \frac{\text{Re} M(t)}{\langle \text{Re} M \rangle_t} \cdot \frac{\langle \text{Re} M \rangle_t}{t}.
\end{align*}
By $V_j\in L^{\infty}(0,\infty)$, we can bound the quadratic variation linearly in time. Specifically, there exists a deterministic constant $K>0$ such that $\langle \text{Re} M\rangle_t\le Kt$ for all $t\ge0$, where $K=\sum_{j=1}^N(\text{Re} \ \mu_j)^2\|V_j\|_{L^{\infty}(0,\infty)}$.
Therefore, we obtain
\begin{align}
\label{541}
\lim_{t\to\infty}\frac{\text{Re} M(t)}{t}=0, \quad \Pas
\end{align}
Therefore, taking the limit superior as $t\to \infty$ in \eqref{540} and applying \eqref{541}, we obtain
\[ \limsup_{t\to \infty}\frac{1}{t}\log\|X(t)\|^2_{L^2_{\xi}(\mathbb{R}^d)}\le \limsup_{t\to \infty}\left( \frac{\log \|x\|^2_{L^2}}{t}-\omega+2\frac{\text{Re} M(t)}{t}\right)=-\omega, \quad \Pas. \]
This concludes the proof of Theorem \ref{exdecay2}.
\end{proof}
\begin{rmk}
The exponential decay result established in Theorem \ref{exdecay2} includes the case of standard Brownian motions as a special case. Specifically, if $M_j(t)=B_j(t)$ are independent standard Brownian motions, their quadratic variations satisfy $\langle M_j \rangle_t=t$, which implies $V_j(t)\equiv 1$ for all $t\ge 0$. In this setting, our assumption (H4) is satisfied with $\alpha_0=1$, and the decay rate $\omega=2\sum_{j=1}^N (\text{Re} \mu_j)^2$ is consistent with the existing stabilization results for SNLS driven by standard Brownian motion (cf. \cite{BRZ14}).
\end{rmk}

\section*{Declarations}

\subsection*{Conflicts of interests}
The authors declare that there is no conflict of interest regarding the publication of this paper.

\subsection*{Data Availability Statements}
Data sharing is not applicable to this article as no datasets were generated or analyzed during the current study.


\begin{thebibliography}{99}

\bibitem{BCRG94}
O. Bang, P. L. Christiansen, F. If, K. O. Rasmussen, Y. B. Gaididei,
\textit{Temperature effects in a nonlinear model of monolayer Scheibe aggregates},
Phys. Rev. E., \textbf{49} 4627–4636 (1994).

\bibitem{BCRG95}
O. Bang, P. L. Christiansen, F. If, K. O. Rasmussen, Y. B. Gaididei,
\textit{White noise in the two-dimensional nonlinear Schr$\ddot{\text{o}}$dinger equation},
Appl. Anal., \textbf{57} 3–15 (1995).

\bibitem{BRZ14}
V. Barbu, M. R$\ddot{\text{o}}$ckner, D. Zhang,
\textit{Stochastic nonlinear Schr$\ddot{\text{o}}$dinger equations with linear multiplicative noise: rescaling approach},
J. Nonlinear Sci., \textbf{24} (3) 383-409 (2014).

\bibitem{BRZ16}
V. Barbu, M. R$\ddot{\text{o}}$ckner, D. Zhang,
\textit{Stochastic nonlinear Schr$\ddot{\text{o}}$dinger equations},
Nonlinear Anal., \textbf{136} 168-194 (2016).

\bibitem{BRZ18}
V. Barbu, M. R$\ddot{\text{o}}$ckner, D. Zhang,
\textit{Optimal bilinear control of nonlinear stochastic Schr$\ddot{\text{o}}$dinger equations driven by linear multiplicative noise},
Ann. Probab., \textbf{46} (4) 1957-1999 (2018).

\bibitem{BG09}
A. Barchielli, M. Gregoratti,
\textit{Quantum Trajectories and Measurements in Continuous Time: The Diffusive Case},
Lecture Notes Physics, \textbf{782} Springer, Berlin (2009).

\bibitem{BPP10}
A. Barchielli, C. Pellegrini, F. Petruccione,
\textit{Stochastic Schr$\ddot{\text{o}}$dinger equations with coloured noise},
Lett. J. Explor. Front. Phys. EPL., \textbf{91} (2010).

\bibitem{BD99}
A. de Bouard, A. Debussche, 
\textit{A stochastic nonlinear Schr$\ddot{\text{o}}$dinger equation with multiplicative noise}, 
Comm. Math. Phys., \textbf{205} 161–181 (1999).

\bibitem{BD03}
A. de Bouard, A. Debussche,
\textit{The stochastic nonlinear Schr$\ddot{\text{o}}$dinger equation in $H^1$},
Stoch. Anal. Appl., \textbf{21} 97–126 (2003).

\bibitem{BHM20}
Z. Brze\'zniak, F. Hornung, U. Manna,
\textit{Weak martingale solutions for the stochastic nonlinear Schr$\ddot{\text{o}}$dinger equation driven by pure jump noise},
Stoch. Partial Differ. Equ.: Anal. Comput., \textbf{8} (1) 1-53 (2020).

\bibitem{BHW19}
Z. Brze\'zniak, F. Hornung, L. Weis,
\textit{Martingale solutions for the stochastic nonlinear Schr$\ddot{\text{o}}$dinger equation in the energy space},
Probab. Theory Related Fields, \textbf{174} (3-4) 1273-1338 (2019).

\bibitem{BHW22}
Z. Brze\'zniak, F. Hornung, L. Weis,
\textit{Uniqueness of martingale solutions for the stochastic nonlinear Schr$\ddot{\text{o}}$dinger equation on 3d compact manifolds},
Stoch. Partial Differ. Equ.: Anal. Comput., (2022). https://doi.org/10.1007/s40072-022-00238-w

\bibitem{C03}
T. Cazenave,
\textit{Semilinear Schr\"odinger equations},
Courant Lect. Notes in Math., \textbf{10} (2003).

\bibitem{CW90}
T. Cazenave, F. B. Weissler,
\textit{The Cauchy problem for the critical nonlinear Schr\"odinger equation in $H^s$},
Nonlinear Anal., \textbf{14} 807-836 (1990).

\bibitem{DHM25}
I. D\^oku, S. Hashimoto, S. Machihara,
\textit{The well-posedness of the stochastic nonlinear Schr$\ddot{\text{o}}$dinger equations in $H^2(\mathbb{R}^d)$},
Advances in Differential Equations, \textbf{30} (7-8) 527-560 (2025).

\bibitem{GV79}
J. Ginibre, G. Velo,
\textit{On a class of nonlinear Schr$\ddot{\text{o}}$dinger equations},
J. Funct. Anal., \textbf{32} 1-71 (1979).

\bibitem{HHM24}
M. Hamano, S. Hashimoto, S. Machihara,
\textit{Global and local solutions of stochastic nonlinear Schr$\ddot{\text{o}}$dinger system with quadratic interaction},
Bull. Iran. Math. Soc., \textbf{50} (22) (2024).

\bibitem{HHM25}
M. Hamano, S. Hashimoto, S. Machihara,
\textit{Global solution for the stochastic nonlinear Schr$\ddot{\text{o}}$dinger system with quadratic interaction in four dimensions},
Saitama Math. J., \textbf{36} 1-13 (2025).

\bibitem{HRZ19}
S. Herr, M. R$\ddot{\text{o}}$ckner, D. Zhang,
\textit{Scattering for stochastic nonlinear Schr$\ddot{\text{o}}$dinger equations},
Comm. Math. Phys., \textbf{368} (2) 843-884 (2019).

\bibitem{H20}
F. Hornung,
\textit{The stochastic nonlinear Schr$\ddot{\text{o}}$dinger equation in unbounded domains and non-compact manifolds},
Nonlinear Differ. Equ. Appl., \textbf{27} 40 (2020).

\bibitem{K87}
T. Kato,
\textit{On nonlinear Schr$\ddot{\text{o}}$dinger equations},
Ann. Inst. H. Poincar$\acute{\text{e}}$ Phys. Th$\acute{\text{e}}$or., \textbf{46} 113-129 (1987).

\bibitem{K89}
T. Kato,
\textit{Nonlinear Schr$\ddot{\text{o}}$dinger Equations, Schr$\ddot{\text{o}}$dinger Operators, (S$\phi$nderberg, 1988)},
Lecture Notes in Physics, \textbf{345} Springer, Berlin 218–263 (1989).

\bibitem{LS89}
R. Sh. Liptser, A. N. Shiryayev,
\textit{Theory of Martingales},
Mathematics and Its Applications, Kluwer Academic Publishers, (1989).

\bibitem{N15}
F. Natali,
\textit{Exponential Stabilization for the Nonlinear Schr$\ddot{\text{o}}$dinger Equation with Localized Damping},
J. Dyn. Control Syst., \textbf{21} 461-474 (2015).

\bibitem{T84}
M. Tsutsumi,
\textit{Nonexistence of global solutions to the Cauchy problem for the damped nonlinear Schr$\ddot{\text{o}}$dinger equations},
SIAM, J., Math., Anal., \textbf{15} (2) (1984).

\bibitem{T87}
Y. Tsutsumi,
\textit{$L^2$-solutions for nonlinear Schr$\ddot{\text{o}}$dinger equations and nonlinear groups},
Funkcial. Ekvac., \textbf{30} 115-125 (1987).

\end{thebibliography}
\end{document}